\documentclass[12pt, dvipsnames]{amsart}

\usepackage[english]{babel}

\usepackage[T1]{fontenc}

\usepackage{Alegreya}      
\usepackage{AlegreyaSans}  

\usepackage{tikz}
\usepackage{tikz-cd}
\definecolor{my1}{RGB}{30,144,255}
\definecolor{my2}{RGB}{255,20,147}
\usepackage[colorlinks=true,linkcolor=my2, citecolor=my1]{hyperref}
\usepackage{amsmath, amsfonts, mathrsfs, amsthm, amstext}
\usepackage{enumerate}
\usepackage{array}
\usepackage{enumitem}
\usepackage{hologo}
\usepackage{graphicx}
\usepackage{multirow}
\usepackage[colorinlistoftodos]{todonotes}

\makeatletter
\DeclareFontEncoding{LS1}{}{}
\DeclareFontSubstitution{LS1}{stix}{m}{n}
\makeatother

\DeclareMathAlphabet\mathscr{LS1}{stixscr}{m}{n}
\SetMathAlphabet\mathscr{bold}{LS1}{stixscr}{b}{n}

\usepackage[hmargin=2.3cm, vmargin=3cm]{geometry}
 



%

\usepackage{booktabs}
\usepackage{pifont}

\setcounter{tocdepth}{1}

\title{Brownian motion on reflection quantum groups. Construction and cutoff.}
\author{Jean DELHAYE}
\thanks{}

\renewcommand{\and}{\quad \hbox{\&}\quad}
\newcommand{\et}{\quad \hbox{\&}\quad}

\newtheorem{thm}{Theorem}[section]
\newtheorem{lem}[thm]{Lemma}
\newtheorem{prop}[thm]{Proposition}
\newtheorem{cor}[thm]{Corollary}

\theoremstyle{definition}

\newtheorem{conj}[thm]{Conjecture}

\newtheorem{de}[thm]{Definition}

\newtheorem{rem}[thm]{Remark}

\DeclareMathOperator{\Meix}{Meix}

\DeclareMathOperator{\Irr}{Irr}

\DeclareMathOperator{\id}{id}

\newcommand{\sg}{>}
\renewcommand{\sl}{<}
\newcommand{\vphi}{\varphi}

\newcommand{\C}{\mathbb{C}}

\newcommand{\E}{\mathbb{E}}
\newcommand{\p}{\mathbb{P}}
\newcommand{\F}{\mathbb{F}}
\newcommand{\G}{\mathbb{G}}

\newcommand{\N}{\mathbb{N}}
\newcommand{\R}{\mathbb{R}}
\newcommand{\Z}{\mathbb{Z}}

\newcommand{\longto}{\longrightarrow}

\renewcommand{\d}{\mathrm{d}}

\renewcommand{\O}{\mathcal{O}}

\counterwithin{equation}{section}

\allowdisplaybreaks

\usepackage{fancyhdr}
\pagestyle{fancy}
\fancyhf{}
\fancyhead[LE,RO]{\thepage}
\fancyhead[CE]{Jean DELHAYE}
\fancyhead[CO]{BROWNIAN MOTION ON REFLECTION QUANTUM GROUPS}

\setlength{\headheight}{14.0pt}

\begin{document}

\begin{abstract}
    In this study, we construct an analog of the Brownian motion on free reflection quantum groups and compute its cutoff profile.
\end{abstract}

\maketitle

{\hypersetup{hidelinks}
\tableofcontents}

\section{Introduction}

Given a sequence of compact groups $(G_N)_{N \in \N}$, each equipped with a Markov process $(X_t^{(N)})_{t \geq 0}$, we say that it exhibits a \emph{cutoff} at time $(t_N)$ if the total variation distance to the Haar probability measure $m_N$ drops within a window of order $o(t_N)$, that is if 
$$
d_N\big(t_N(1-\epsilon)\big)\underset{N\to\infty}{\longto} 1
\quad \text{and} \quad
d_N\big(t_N(1+\epsilon)\big)\underset{N\to\infty}{\longto} 0,
\quad \epsilon \sg 0,
$$
where 
$$
d_N(t) = \sup_{A\subset G_N}\left| \p[X_t^{(N)} \in A] - m_N(A) \right|
$$
denotes the total variation distance between the process at time $t$ and the Haar probability measure.  
More precisely, if there exists a continuous function $f:\R \to [0,1]$, non-increasing from $1$ to $0$, such that 
$$
d_N\big(t_N + cs_N\big) \underset{N\to\infty}{\longto} f(c),\quad c\in \R,
$$
then $f$ is called a \emph{cutoff profile} of the process.

The first instance of the cutoff phenomenon appeared in the work of P. Diaconis and M. Shahshahani on random transpositions \cite{DS81}. Many other results followed, both in algebraic settings and beyond, as those in \cite{Ald83, BD92, BLPS18, LLP10, LS10, Mel14}.
It was later extended by J. P. McCarthy to the quantum setting in \cite{Mcc17, Mcc19}. Once again, other examples followed in this framework, for instance in \cite{Fre18, Fre19}.
Finally, we point out several works in which not only is the cutoff illustrated, but cutoff profiles are also computed (in both classical and quantum settings), such as in \cite{BN22, FTW21, Lac16, NT22, OTT25, DGM90, Tey19}.\\

The goal of this paper is twofold. First, we aim to define a Brownian motion on the quantum reflection groups $H_N^{s+}$. While Brownian motions have already been constructed on certain compact quantum groups, these constructions remain highly case-specific. On $O_N^+$ and $S_N^+$, the process arises through analogues of the Hunt-type formula for central generating functionals, whereas on $U_N^+$ it is obtained via the centralization of Gaussian generating functionals. However, no general framework exists for defining Brownian motions on arbitrary compact quantum groups, and in particular, none of these methods apply to $H_N^{s+}$. In Section \ref{construction}, we propose a natural construction of such a process, consistent with the known quantum cases. Second, in Section \ref{profile}, we establish the existence of a cutoff profile for this process and prove a genuine dependency in the parameter $s\in \N^*$.

\begin{thm}
    The Brownian motion on $H_N^{s+}$ has cutoff at time $t_N = N\ln N$. More precisely, it exhibits the following cutoff profile:
    $$
    d_N\left(
    N\ln \left(
    \sqrt{s}N
    \right)+cN
    \right) \underset{N\to\infty}{\longto}
    d_{\mathrm{TV}}
    \left(
    \eta_c^s,\nu_{s}
    \right)
    ,\quad c \sg 0,
    $$
    and 
    $$
    \underset{N\to\infty}{\lim\inf}\, d_N\left(
    N\ln \left(
    \sqrt{s}N
    \right)+cN
    \right) \ge  d_{\mathrm{TV}} (\eta_c^s,\nu_s),\quad c\in \R,
    $$
    where $\nu_{s}$ and $\eta_c^s$ are the following probability measures: 
    $$
\nu_s = \frac{\sqrt{4-(x-\sqrt s)^2}}{2\pi (x\sqrt s+1)}
\mathbf 1_{\vert 
x-\sqrt s
\vert \le 2}
\d x + \left(
1-s^{-1}
\right) \delta_{-1/\sqrt s}= \mathrm{Pois}^+(s^{-1},\sqrt s)*\delta_{-1/\sqrt s} = \mathrm{Meix}^+(\sqrt s,1),
$$
    and 
    $$
    \eta_c^s = f_c\d \nu_s + 
    \mathbf 1_{c\sl 0}
    \frac{e^{-c}-e^c}{e^{-c}+\sqrt s}
    \delta_{e^c+\sqrt s+e^{-c}},
    \quad \hbox{with }
    f_c(x) = \frac{e^c+\sqrt s}{e^c+e^{-c}+\sqrt s-x}.
    $$
\end{thm}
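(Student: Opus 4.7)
The plan is to follow the now-standard two-step strategy for cutoff profiles in the quantum setting: first derive the upper bound via the spectral expansion of the convolution semigroup and identify its $N\to\infty$ limit, then match it by a Chebyshev-type test-function lower bound.

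For the upper bound, the Brownian motion of Section~\ref{construction} comes from a central generating functional, so its state $\phi_t^{(N)}$ satisfies
$$
\frac{\d\phi_t^{(N)}}{\d h_N} - 1 \;=\; \sum_{\alpha\ne\mathbf 1} d_\alpha\, e^{-t\lambda_\alpha^{(N)}}\,\chi_\alpha,
$$
where the sum is over non-trivial irreducibles $\alpha\in\Irr(H_N^{s+})$, $d_\alpha$ is the quantum dimension, $\lambda_\alpha^{(N)}\ge 0$ is the eigenvalue of the generator on the $\alpha$-isotypic component, and $\chi_\alpha$ is the normalised character. Since this element lies in the commutative subalgebra generated by the characters, $d_N(t)$ equals half the $L^1$-norm of the right-hand side against the spectral measure $\nu_N$ of the fundamental character under $h_N$. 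The irreducibles of $H_N^{s+}$ are indexed by words over $\Z_s$ with fusion rules and dimensions known from Banica-Vergnioux, and the explicit spectrum of the generator computed in Section~\ref{construction} gives that the leading-order weight of a word of length $\ell$ at time $t = N\ln(\sqrt s N) + cN$ is $s^{-\ell/2}e^{-c\ell}$. Summing these as a formal geometric series in the fundamental character $\chi$ produces the generating function $f_c(\chi)-1$; combined with the convergence, under $h_N$, of $\chi$ to a free Meixner variable of law $\nu_s$, one obtains
$$
d_N(t)\longto \tfrac12\int|f_c-1|\,\d\nu_s \;=\; d_{\mathrm{TV}}(\eta_c^s,\nu_s)
$$
in the regime $c\sg 0$; for $c\sl 0$ the geometric series diverges at the right endpoint of the support, and an Abel-type resummation transfers the excess into the atom at $e^c+\sqrt s+e^{-c}$ appearing in $\eta_c^s$.

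The lower bound valid for all $c\in\R$ follows from a Chebyshev-type argument: one picks a polynomial test function $P$ in the fundamental character matching the dominant terms of the above character sum, so that $\phi_t^{(N)}(P) - h_N(P)$ remains of order $1$ while $\Var_{h_N}(P)$ is controlled using the free-Meixner moments of $\chi$. The main obstacle throughout is the bookkeeping of the atomic contributions: the atom of $\nu_s$ at $-1/\sqrt s$ contributes a nontrivial term to $d_{\mathrm{TV}}(\eta_c^s,\nu_s)$ via $f_c(-1/\sqrt s)$ and must be tracked inside the dimension-weighted character sums, while in the subcritical regime $c\sl 0$ the atom of $\eta_c^s$ at $e^c+\sqrt s+e^{-c}$ has to be produced by a careful truncation at word length $K(N,c)\to\infty$ together with a sharp control of the remaining tail. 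Once these singular contributions are correctly isolated, the upper and lower bounds coincide and the theorem follows.
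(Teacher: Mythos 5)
There is a genuine gap at the very first step of your upper bound. You assert that the density $\d\vphi_t/\d h-1$ ``lies in the commutative subalgebra generated by the characters'', so that $d_N(t)$ reduces to an $\mathrm L^1$-norm against the spectral measure of a single fundamental character. For $s\ge 2$ the fusion ring of $H_N^{s+}$ is \emph{noncommutative} (Theorem \ref{irr characters}: the leading term of $\chi_w\chi_{w'}$ is indexed by a concatenation of $w$ and $w'$ in the free-product-type monoid $M$, and $ww'\ne w'w$ there), and there is no single character generating the central algebra --- there are $s$ distinct generators $\chi_{az^ra}$ of type one. So the central algebra is not of the form $\C[\chi]$, and the reduction to a one-dimensional spectral problem is precisely what must be \emph{constructed}, not assumed. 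The paper does this by introducing $x_1=s^{-1/2}\sum_r\chi_{az^ra}$, proving the recursion $x_1x_n=x_{n+1}+\sqrt s\,x_n+x_{n-1}$ (so that $\C[x_1]$ is commutative and the spectral measure of $x_1$ under $h$ is \emph{exactly} $\nu_s$ for every $N$, not merely in the limit), and building a second conditional expectation $\F:\O(H_N^{s+})_0\to\C[x_1]$. A related omission: the Brownian motion is \emph{not} $\F$-invariant (e.g.\ $\vphi_t(\chi_{a^2})=(N-1)e^{-t/(N-1)}\ne Ne^{-t/N}=\vphi_t(\chi_{aza})$), so even once the right commutative algebra is located one must compare $\vphi_{t_N}$ with $\widetilde\vphi_{t_N}=\vphi_{t_N}\circ\F$ and prove $\Vert\vphi_{t_N}-\widetilde\vphi_{t_N}\Vert_1\to0$; the paper does this by an $\mathrm L^2$ domination argument for $c>0$ and uses $\Vert\F\Vert=1$ for the lower bound. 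Your proposal has no counterpart to either step, and they are the actual mathematical content of the proof.

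The lower bound is also not reachable by the Chebyshev/second-moment argument you sketch: a test polynomial with controlled variance yields a bound of the form $1-O(\Var/(\text{mean gap})^2)$, which does not produce the exact constant $d_{\mathrm{TV}}(\eta_c^s,\nu_s)$ required for a profile. The paper instead upgrades moment convergence (with uniformly compact supports, via the moment domination $\widetilde\vphi_{t_N}(X^{2n})\le\eta_c^s(X^{2n})$) to weak convergence of the restriction of $\widetilde\vphi_{t_N}$ to $\C[x_1]$ towards $\eta_c^s$, and evaluates on the optimal continuity set $B_c=\{f_c<1\}\cup V_s$; in particular, for $c<0$ the atom of $\eta_c^s$ at $e^c+\sqrt s+e^{-c}$, which lies outside the support of $\nu_s$, comes from the explicit decomposition of Lemma \ref{decompo of eta_c^s} rather than from an Abel resummation or a truncation of the character sum. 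Your overall architecture (spectral expansion, $\mathrm L^1$ limit of the density for $c>0$, separate treatment of the singular part for $c<0$) matches the paper in spirit, but as written the proof does not go through for $s\ge2$.
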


\section{Preliminaries}\label{preliminaries}

In this section, we introduce Lévy processes and the cutoff phenomenon on compact matrix quantum groups in the same spirit as in \cite{FTW21} and \cite{Del24} and will therefore produce a similar introduction, focusing on reflection quantum groups.

\subsection{Free reflection quantum groups}

Free reflection quantum groups, a specific class of compact quantum groups, were introduced by S. Wang in \cite{Wan95}. Quantum groups, more broadly, were originally defined by S.L. Woronowicz \cite{Wor98}. These structures involve C*-algebras, fitting their noncommutative topological nature. However, in this article, we will present an alternative definition focusing on the algebraic aspects, making it more accessible to non-expert readers.

Let us recall that a $*$\emph{-algebra} is a complex unital algebra $A$ endowed with an \emph{involution} $A\to A,x\mapsto x^*$, i.e. an antimultiplicative antilinear map such that $x^{**} = x$ for all $x\in A$. Also, a $*$-ideal $B$ of $A$ is a $*$-subalgebra such that $aB\cup Ba\subset B$ for all $a\in A$.

\begin{de}\label{def of H_Ns+}
We define $\O(H_N^{s+})$ to be the universal $*$-algebra generated by $N^2$ normal elements $\{u_{ij}\}_{1\le i,j\le N}$ such that 
\begin{enumerate}[label = (\roman*)]
    \item $u = (u_{ij})$ and $u^t = (u_{ji})$ are unitary;
    \item $p_{ij} := u_{ij}u_{ij}^*$ is a projection;
    \item $u_{ij}^s = p_{ij}$.
\end{enumerate}
\end{de}

Let $H_N^s$ be the classical reflection group, i.e. the group of $N\times N$ complex valued matrices that have exactly one nonzero coefficient on each row and column that is an $s$-th root of the unity. Let $\O(H_N^s)$ be the algebra of \emph{regular functions} on $H_N^s$, i.e. the $*$-algebra generated by the functions $c_{ij}:H_N^s\to \C$ sending a matrix to its $(i,j)$ coefficient, where the involution corresponds to the complex conjugation: $c_{ij}^* = \overline c_{ij}$. Then, quotienting $\O(H_N^{s+})$ by its commutator ideal yields a surjection 
$$
\pi: \left\{
\begin{matrix}
\O(H_N^{s+})&
\longto&
\O(H_N^s)\\
u_{ij}&
\longmapsto&
c_{ij}.
\end{matrix}
\right.
$$
In that sense, $H_N^{s+}$ can be thought of as a noncommutative version of $H_N^s$. The group structure of $H_N^s$ can be recovered in an algebraic way on $\O(H_N^s)$ via the following remark:
$$
c_{ij}(gh) = \sum_{k=1}^N c_{ik}(g)c_{kj}(h) = \sum_{k=1}^N (c_{ik}\otimes c_{kj})(g,h),\quad g,h\in H^s_N, 1\le i,j\le N.
$$
The ``group law" on $H_N^{s+}$ is therefore given by the unique $*$-homomorphism $\Delta:\O(H_N^{s+})\to \O(H_N^{s+})\otimes \O(H_N^{s+})$, called \emph{coproduct}, such that 
$$
\Delta(u_{ij}) = \sum_{k=1}^N u_{ik}\otimes u_{kj}.
$$
The existence of $\Delta$ follows from the universal property. Additionally, let us outline the existence of the \emph{counit} and the \emph{antipode}, which are crucial objects. The counit is given by the homomorphism $\varepsilon: \O(H_N^{s+}) \to \C$, defined by $\varepsilon(u_{ij}) = \delta_{ij}$. The antipode, on the other hand, is the antihomomorphism $S: \O(H_N^{s+}) \to \O(H_N^{s+})$, defined by $S(u_{ij}) = u_{ji}^*$. These maps serve as the unit and the inverse map in the noncommutative setting, respectively.

In this context, probability measures can be generalized by identifying them with their integration linear forms. These forms correspond to \emph{states}, which are unital positive linear forms on $\O(H_N^{s+})$. Notably, there exists a particular state that serves as the analog of the Haar measure on $H_N^{s+}$ (see \cite{Wor98}).

\begin{thm}
There is a unique state $h$ on $H_N^{s+}$ such that 
$$
(\id \otimes h) \circ \Delta(x) = h(x) \otimes 1 = (h \otimes \id) \circ \Delta(x),\quad x\in \O(H_N^{s+}).
$$
It is called the \emph{Haar state} of $H_N^{s+}$. 
\end{thm}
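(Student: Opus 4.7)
The strategy is to deduce the statement from Woronowicz's general existence and uniqueness theorem for Haar states on compact matrix quantum groups \cite{Wor98}, as already indicated by the attribution in the text. The first step is to verify that $\O(H_N^{s+})$ equipped with the coproduct $\Delta$ fits the framework of \cite{Wor98}: the fundamental matrix $u=(u_{ij})$ and its transpose are unitary by Definition \ref{def of H_Ns+}; the $u_{ij}$ together with their adjoints generate the algebra; coassociativity is a routine check on generators using $\Delta(u_{ij})=\sum_k u_{ik}\otimes u_{kj}$; and the counit $\varepsilon$ and antipode $S$ described in the excerpt complete the Hopf $*$-algebra structure. Passing to the universal $C^*$-completion places $H_N^{s+}$ in Woronowicz's setting and delivers both the existence and uniqueness of $h$.

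For a more self-contained perspective, I would outline the Peter-Weyl construction. The key preparatory step is to show that every finite-dimensional corepresentation of $\O(H_N^{s+})$ is unitarizable and hence completely reducible; this yields a decomposition
$$
\O(H_N^{s+}) \;=\; \bigoplus_{\alpha \in \Irr(H_N^{s+})} C(\alpha),
$$
where $C(\alpha)$ denotes the span of matrix coefficients of the irreducible unitary corepresentation $\alpha$. One then defines $h$ to be the linear functional equal to $1$ on the trivial one-dimensional component $\C\cdot 1$ and vanishing on each $C(\alpha)$ with $\alpha$ nontrivial. Positivity is obtained by realizing $h$ as a weak limit of Cesàro means of convolution powers of an arbitrary faithful state, following the argument of \cite{Wor98}.

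Bi-invariance then follows from the inclusion $\Delta(C(\alpha))\subset C(\alpha)\otimes C(\alpha)$: for $x\in C(\alpha)$ with $\alpha$ nontrivial, both $(\id\otimes h)\Delta(x)$ and $(h\otimes\id)\Delta(x)$ vanish, matching $h(x)\cdot 1=0$; on the trivial component both sides reduce to $x$. For uniqueness, given a second bi-invariant state $h'$, applying its invariance to matrix coefficients $v_{ij}$ of any nontrivial irreducible corepresentation $v$ yields
$$
\sum_k h'(v_{ik})\,v_{kj} \;=\; h'(v_{ij})\cdot 1.
$$
This exhibits a linear relation between $1$ (a matrix coefficient of the trivial corepresentation) and the $v_{kj}$ (matrix coefficients of the inequivalent $v$). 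Linear independence of matrix coefficients across inequivalent irreducibles forces $h'(v_{ij})=0$, and $h'(1)=1$ on the trivial part, so $h'=h$.

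The main obstacle is the unitarizability of arbitrary finite-dimensional corepresentations, which itself relies on averaging against an invariant functional; this interdependence is the subtle point of \cite{Wor98}, and for this reason the cleanest route is to invoke the general theorem directly once the CMQG axioms above have been verified rather than attempting a fully elementary construction tailored to $H_N^{s+}$.
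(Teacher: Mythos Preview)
Your proposal is correct and aligns with the paper's approach: the paper does not prove this theorem at all but merely states it with the attribution ``(see \cite{Wor98})'', treating it as a standard fact from Woronowicz's general theory. Your first paragraph, which verifies the compact matrix quantum group axioms for $H_N^{s+}$ and then invokes \cite{Wor98}, is exactly the intended justification; the additional Peter--Weyl outline you provide goes beyond what the paper offers and is a helpful elaboration rather than a deviation.
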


\begin{rem} 
Let us point out the fact that the Haar state is \emph{tracial}, i.e. we have $h(xy) = h(yx)$ for any $x,y \in \O(H_N^{s+})$.
\end{rem}

The representation theory is a great tool to study the asymptotic behaviour of random walks on compact groups (see, for instance, \cite[Chap 4]{Dia88}). Let us describe the irreducible representations of the reflection quantum group (see \cite[Thm 8.2]{BV09}).

\begin{thm}\label{irr characters}
    The irreducible characters of $H_N^{s+}$ may be labelled by the elements of the submonoid $M = \langle az^ra: r\in \{0,\cdots,s-1\} \rangle \subset  \langle a,z : z^s=1\rangle$ equipped with the involution $a^* = a$ and $z^* = z^{-1}$ with 
    $$
    \chi_{\emptyset} = 1,\quad \chi_{a^2} = \sum_i p_{ii}-1 \emph{\and} \chi_{az^ra} = \sum_i u_{ii}^r,\quad 0\le r\le s-1,
    $$
    and the following fusion rule
    $$
    \chi_{vaz^i}\cdot \chi_{z^jaw} = 
    \left\{
    \begin{array}{ll}
    \chi_{vaz^{i+j}aw} & \hbox{if }i+j\ne0\!\!\!\!\! \mod s \\
    \chi_{va^2w}+ \chi_v\cdot \chi_w &\hbox{else}. 
\end{array}\right.
    $$
    More precisely, the elements of $M$ are of the form  
    $$
    a^{\ell_1}z^{r_1}\cdots z^{r_{k-1}}a^{\ell_k}
    $$
    with $r_i\in \Z_s^*$, $\ell_1$ and $\ell_k$ being odd and all other $\ell_i$'s being even except if $k=1$ in which case $\ell_1$ is even. 
\end{thm}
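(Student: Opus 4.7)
The plan is to derive the result via Tannaka-Krein duality combined with a diagrammatic description of the representation category of $H_N^{s+}$, in the spirit of the ``easy quantum group'' framework. The first step is to identify the intertwiner spaces $\Mor(u^{\otimes k}, u^{\otimes l})$ (and more generally the ones involving the adjoint $\bar u$) with the linear span of certain decorated noncrossing partitions: their legs carry labels in $\Z_s$ and each block satisfies a balance condition (the sum of labels in a block vanishes mod $s$). The defining relations of Definition \ref{def of H_Ns+} --- normality of the $u_{ij}$, unitarity of $u$ and $u^t$, the projection relation $u_{ij}u_{ij}^* = p_{ij}$ and the power relation $u_{ij}^s = p_{ij}$ --- translate into a natural generating set for this category, and Woronowicz's Tannakian reconstruction theorem ensures that it exhausts the morphism spaces.

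Building on this, I would associate to each word $w \in M$ an idempotent in the partition category and thus extract a subrepresentation $V_w$ out of a suitable tensor product of copies of $u$ and $\bar u$ dictated by the letters of $w$. Computing the character of each $V_w$ yields the asserted formulas: $\chi_{az^r a} = \sum_i u_{ii}^r$ for $1 \leq r \leq s-1$ follows from isolating the ``diagonal'' subdiagram carrying an $r$-shift, while $\chi_{a^2} = \sum_i p_{ii} - 1$ arises from a specific subrepresentation of $u \otimes \bar u$ distinct from the trivial one. Irreducibility and mutual inequivalence of the $V_w$ are then established via the Woronowicz orthogonality relations: one computes $h(\chi_v \chi_w^*)$ using the Weingarten--Gram matrix of the balanced noncrossing partition category and verifies, by Möbius inversion on the relevant partition lattice, that this inner product equals $\delta_{v,w}$ for $N$ large enough. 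Completeness --- that $M$ exhausts all irreducible characters --- then follows since $\chi_{a^2}$ and the $\chi_{az^r a}$ generate the full character $*$-algebra by construction.

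The fusion rule is obtained by analysing the word concatenation $vaz^i \cdot z^j a w = v \, az^{i+j}a \, w$ inside the partition category. When $i+j \not\equiv 0 \pmod s$, the middle letter $az^{i+j}a$ lies in the generating set of $M$ and no further contraction of partitions is available, producing the single irreducible summand $\chi_{vaz^{i+j}aw}$. When $i+j \equiv 0 \pmod s$, the central factor simplifies to $a \cdot 1 \cdot a = a^2$, but one may additionally cap off the two $a$'s against each other, yielding exactly the two summands $\chi_{va^2w}$ and $\chi_v \cdot \chi_w$ as stated.

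The main obstacle will be the combinatorial bookkeeping in the $i+j \equiv 0 \pmod s$ case: one has to track carefully how the $\Z_s$-labels propagate through the noncrossing contractions and confirm that precisely the two summands above appear, each with multiplicity one and no extraneous terms. A second delicate point is the orthogonality check for large $N$, which relies on the non-degeneracy of the Gram matrix of the category of balanced noncrossing partitions --- a generic-$N$ argument involving the classical asymptotics of these Gram matrices.
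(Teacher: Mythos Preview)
The paper does not supply a proof of this theorem; it is quoted from \cite[Thm~8.2]{BV09} and used as a black box throughout. Your proposal therefore goes well beyond what the paper itself does.

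Your suggested route via Tannaka--Krein duality and a category of $\Z_s$-decorated noncrossing partitions is plausible and in the spirit of the easy quantum group literature, but it is \emph{not} the route taken in \cite{BV09}. There, Banica and Vergnioux exploit the free wreath product realisation $H_N^{s+}\cong \widehat{\Z_s}\wr_* S_N^+$ and deduce the representation theory from a general analysis of free wreath products together with the already known fusion rules of $S_N^+$. The monoid $M$ and the recursive fusion formula then emerge from combining the group law of $\Z_s$ with the recursive structure of the $S_N^+$ fusion rules, rather than from a direct diagrammatic computation.

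Your partition-category sketch could in principle be completed, but as written it has a structural gap: the fusion-rule paragraph implicitly assumes that $\chi_{vaz^{i+j}aw}$ is already known to be irreducible, which is part of what you are trying to establish. The standard fix is to argue by induction on word length --- define the $V_w$ recursively via the asserted tensor decompositions, and prove simultaneously that each $V_w$ is irreducible, that distinct words give inequivalent representations, and that the fusion rules hold at the next stage. Without that inductive skeleton, the argument is circular. The Gram-matrix/orthogonality step is also more delicate than you indicate: for $H_N^{s+}$ one must work with the full colored category, and the relevant Möbius inversion is considerably heavier than in the uncolored $S_N^+$ or $O_N^+$ cases.
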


We denote by $\O(H_N^{s+})_0$ the algebra generated by the characters and call it the \emph{central algebra} of $H_N^{s+}$. An important feature of this subalgebra is the existence of a conditional expectation (see \cite[Chap 9]{AP}) $\E: \O(H_N^{s+})\to \O(H_N^{s+})_0$ that leaves the Haar state invariant, let us recall the definition of such an object.

\begin{de}
Let $A$ be a $C^*$-algebra (that is a Banach $*$-algebra such that $\Vert a^*a\Vert = \Vert a\Vert^2$ for all $a\in A$) and $B\subset A$ a subalgebra of $A$. We call \emph{conditional expectation} from $A$ onto $B$ a linear map $\E:A\to B$ satisfying:
\begin{enumerate}[label = (\roman*)]
    \item $\E[A_+]\subset B_+$ where $A_+ = \{a^*a : a\in A\}$ and $B_+ = A_+\cap B$;
    \item $\E[b] = b$ for all $b\in B$;
    \item $\E[bab'] = b\E[a]b'$ for all $a\in A$ and $b,b'\in B$.
\end{enumerate}
\end{de}

Note that a conditional expectation is always a norm-one projection. 

Let us now describe the quantum analogue of a Lévy process on $H_N^{s+}$. We call \emph{(quantum) Lévy process} on $H_N^{s+}$ any right-continuous convolution semigroup of states, i.e. a family $(\psi_t)_{t\ge0}$ of states on $H_N^{s+}$ such that 
\begin{itemize}
    \item $\psi_0 = \varepsilon$;
    \item $\psi_t\star \psi_s := (\psi_t\otimes \psi_s)\circ \Delta = \psi_{t+s}$;
    \item $\psi_t\to \psi_0$ weakly as $t\to0$.
\end{itemize}
 
We call \emph{generating functional} on $H_N^{s+}$ an hermitian functional $L:\O(H_N^{s+})\to\C$ that vanishes on $1$ and is positive on $\ker \varepsilon$. There is a one-to-one correspondence between Lévy processes and generating functionals (see \cite[Sec 1.5]{FS16}) via the formulas 
\begin{align*}
    L &= 
    \lim_{t\to 0^+} \frac{\psi_t-\varepsilon}{t}, \\
    \psi_t &= 
    \exp_\star(tL) = 
    \sum_{n\ge0} 
    \frac{(tL)^{\star n}}{n!} ,\quad t\ge0.
\end{align*}

\begin{de}
We will say that a Lévy process $(\psi_t)_{t\ge0}$ is \emph{central} if all the $\psi_t$'s are central, i.e. if $\psi_t = \psi_t \circ \E$ for all $t\ge 0$. Equivalently, a Lévy process is central if its associated generating functional $L$ is \emph{central}, that is, $L = L\circ \E$.
\end{de}

The notion of centrality will become relevant in Section \ref{construction} as we introduce the Brownian motion as a central Lévy process. Observe that for any generating functional $ L $ on $ \O(H_N^{s+}) $, the composition $ L \circ \mathbb{E} $ also defines a generating functional on $ \O(H_N^{s+}) $ which we call the \emph{centralized} generating functional. Thus, we may consider centralized generating functional and when doing so have in mind that all their information is contained within the restriction of $L$ to the central algebra $\O(H_N^{s+})_0$.

\subsection{The cutoff phenomenon}\label{cutoff phenomenon}

In this subsection, we introduce the concept of the \emph{cutoff phenomenon} within the framework of quantum groups. The cutoff phenomenon, a sharp transition in the convergence to equilibrium, is a significant topic in probability theory and has been widely studied in classical settings. Here, we focus on the reflection quantum groups and thus work in the quantum framework. We will provide necessary definitions, discuss the associated Lévy processes, and outline the framework used to investigate the \emph{limit profiles}.

One may consider the universal enveloping C*-algebra $C(H_N^{s+})$ associated with $\O(H_N^{s+})$ (see, for instance, \cite[Sec II.8.3]{Bla06}). By definition, any state on $\O(H_N^{s+})$ uniquely extends to a state on $C(H_N^{s+})$, thus yielding an element of the Fourier-Stieltjes algebra, which corresponds to the topological dual $C(H_N^{s+})^*$ of $C(H_N^{s+})$. By the Riesz representation theorem, this dual space can be viewed as a noncommutative analogue of the measure algebra, equipped with the total variation norm. We denote the norm on this dual space as $\vert \cdot \vert_{FS}$, referring to it as the Fourier-Stieltjes norm. 

Furthermore, the topological double dual $C(H_N^{s+})^{**}$ of $C(H_N^{s+})$ is recognized as the universal enveloping von Neumann algebra of $C(H_N^{s+})$, serving as a noncommutative and universal analogue of classical measure spaces. Using the theory of Haagerup's noncommutative $\mathrm L^p$-spaces, we can readily adapt the argument from \cite[Lem 2.6]{Fre19} to show that:
$$
\frac{1}{2}\|\vphi - \psi\|_{FS} = \sup_{p \in \mathcal P} \vert\vphi(p) - \psi(p)\vert,
$$
where $\mathcal P$ denotes the set of orthogonal projections in $C(H_N^{s+})^{**}$, which can be thought of as the noncommutative counterparts to indicator functions of Borel subsets. The proof details are omitted here, as they are not required for our current discussion. This generalized total variation distance will be employed in our analysis of cutoff phenomena, and we will denote this distance as 
$$
d_{\mathrm{TV}}(\vphi,\psi) = \frac{1}{2} \vert \vphi -\psi\vert_{FS}.
$$
In the classical setting, a particularly significant case arises when both $\mu$ and $\nu$ are absolutely continuous with respect to the Haar measure. We can consider a similar scenario in the quantum context. Define an inner product on $\O(H_N^{s+})$ by $\langle x, y \rangle = h(xy^*)$. Completing this inner product space yields the Hilbert space $\mathrm L^2(H_N^{s+})$, and $\O(H_N^{s+})$ embeds into $\mathrm B(\mathrm L^2(H_N^{s+}))$ via left multiplication (see \cite[Cor 1.7.5]{NT13} and the comments thereafter). The weak closure of this image is denoted by $\mathrm L^\infty(H_N^{s+})$ and forms a von Neumann algebra.

If $\vphi : \O(H_N^{s+}) \to \mathbb{C}$ is a linear map that extends to a normal bounded map on $\mathrm L^\infty(H_N^{s+})$, then $\vphi$ is an element of the Fourier algebra, which is the Banach space predual $\mathrm L^\infty(H_N^{s+})_*$ of $\mathrm L^\infty(H_N^{s+})$. In this case, $\vert\vphi\vert_{FS} = \|\vphi\|_{\mathrm L^\infty(H_N^{s+})_*}$ (see \cite[Prop 3.14]{BR17}), which further implies, by \cite[Lem 2.6]{Fre19}, that:
$$
d_{\mathrm{TV}}(\vphi,\psi) =\sup_{p \in \widetilde{\mathcal P}} \vert\vphi(p) - \psi(p)\vert,
$$
where $\widetilde{\mathcal{P}}$ is the set of orthogonal projections in $\mathrm L^\infty(H_N^{s+})$. It is important to note that for this formula to be applicable, the states $\vphi$ and $\psi$ must extend to the von Neumann algebra $\mathrm L^\infty(H_N^{s+})$.

\begin{de}\label{def-cutoff}
    Let $(\G_N,(\vphi^{(N)}_t)_{t\ge0})_{N\in\N}$ be a sequence of compact quantum groups each equipped with a Lévy process. We say that they exhibit a \emph{cutoff phenomenon} at time $(t_N)_{N\in \N}$ if for any $\epsilon>0$ we have 
    $$
    d_{\mathrm{TV}}( \vphi_{t_N(1-\epsilon)}^{(N)},h_N) \underset{N\to\infty}{\longto} 1
    \et 
    d_{\mathrm{TV}}( \vphi_{t_N(1+\epsilon)}^{(N)},h_N) \underset{N\to\infty}{\longto} 0.
    $$
    where $h_N$ denotes the Haar state of $\G_N$. More precisely, given a continuous function $f$ decreasing from $1$ to $0$ such that 
    $$
     d_{\mathrm{TV}}(\vphi_{t_N + cs_N},h) \underset{N\to\infty}{\longto} f(c),\quad c\in \R,
    $$
    for some sequence $(s_N)_{N\in \N}$ with $s_N = o(t_N)$, we say that $f$ is the \emph{limit profile} of the process.
\end{de}

Note that the profile is unique up to affine transformation. Two profiles $f$ and $f'$ are equivalent if we can achieve both with the same process (through a scaling change on the sequences $(t_N,s_N)_N$). It follows from \cite[Rem 2.8]{Del24} that this only happens if $f'$ is an affine transformation of $f$, i.e.
$$
f' = f(a\, \cdot  + b),\quad \hbox{for some } a\sg 0 \hbox{ and }b\in \R.
$$

In this paper, we explore the limit profile of the Brownian motion on $H_N^{s+}$ which takes the form 
$$
c\longmapsto d_{\mathrm{TV}}(\eta_c^s,\nu_s)
$$
where $\nu_s$ is the orthonormal measure for the family of polynomials $(Q_n^s)_{n\in\N}$ defined by 
$$
Q_0^s =1,\quad Q_1^s = X \and XQ_n^s = Q_{n+1}^s +\sqrt sQ_n^s+Q_{n-1}^s,\quad n\ge 1.
$$
and $\eta_c^s$ is the unique probability measure satisfying $\eta_c^s(Q_n^s) = e^{-cn}$. 

Note that we can have a clearer decomposition as follows (see \cite[Thm 2.1]{SY01})
$$
\nu_s = \frac{\sqrt{4-(x-\sqrt s)^2}}{2\pi (x\sqrt s+1)}
\mathbf 1_{\vert 
x-\sqrt s
\vert \le 2}
\d x + \left(
1-s^{-1}
\right) \delta_{-1/\sqrt s}
$$
Note that $\nu_s$ can be seen as a shifted free Poisson law or as a Free Meixner law as follows 
$$
\nu_s = \mathrm{Pois}^+(s^{-1},\sqrt s)*\delta_{-1/\sqrt s} = \mathrm{Meix}^+(\sqrt s,1).
$$
For more information on free Poisson and free Meixner laws, we refer the reader to \cite[Def 12.12]{NS06}) and \cite[Sec 2.2]{BB06} respectively. 

\begin{lem}\label{decompo of eta_c^s}
    We have 
    $$
    \eta_c^s = f_c\d \nu_s + 
    \mathbf 1_{c\sl 0}
    \frac{e^{-c}-e^c}{e^{-c}+\sqrt s}
    \delta_{e^c+\sqrt s+e^{-c}}
    \quad \hbox{where }
    f_c(x) = \frac{e^c+\sqrt s}{e^c+e^{-c}+\sqrt s-x}.
    $$
\end{lem}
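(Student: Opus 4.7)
The plan is to verify that the measure $\mu_c := f_c\,\d\nu_s + \mathbf 1_{c\sl 0}\,\tfrac{e^{-c}-e^c}{e^{-c}+\sqrt s}\,\delta_{x_0}$, with $x_0=e^c+\sqrt s+e^{-c}$, is a probability measure satisfying $\mu_c(Q_n^s)=e^{-cn}$ for every $n\in\N$; the uniqueness built into the defining property of $\eta_c^s$ then forces $\mu_c=\eta_c^s$. Since the $Q_n^s$ form a polynomial basis, this moment condition is equivalent, for $|t|$ small, to the generating-function identity $\mu_c(G(\cdot,t))=(1-e^{-c}t)^{-1}$, where
$$
G(x,t)=\sum_{n\ge 0}Q_n^s(x)\,t^n=\frac{1+\sqrt s\,t}{1-(x-\sqrt s)t+t^2}=\frac{-(1+\sqrt s\,t)}{t(x-x_t)},\qquad x_t:=t+t^{-1}+\sqrt s,
$$
is the ordinary generating function of $(Q_n^s)$, immediate from the three-term recurrence and the initial data $Q_0^s=1,\ Q_1^s=X$.

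Writing $f_c(x)=A/(x_y-x)$ with $A=e^c+\sqrt s$ and $y=e^{-c}$ (so $x_0=x_y$), a partial-fraction decomposition in $x$ gives
$$
G(x,t)f_c(x)=\frac{-(1+\sqrt s\,t)A}{t(x_t-x_y)}\left(\frac{1}{x_t-x}-\frac{1}{x_y-x}\right),
$$
and integration against $\nu_s$ reduces the problem to evaluating the Cauchy transform $g(z):=\int(z-x)^{-1}\d\nu_s(x)$ at $x_t$ and $x_y$. The continued-fraction expansion coming from the Jacobi matrix of the recurrence (constant coefficients $b_n=\sqrt s$ and $a_n=1$ for $n\ge 1$, with $b_0=0$) yields $g(z)=(z-h(z))^{-1}$, where $h(z)=\tfrac12\bigl((z-\sqrt s)-\sqrt{(z-\sqrt s)^2-4}\bigr)$ is the Cauchy transform of the semicircle of mean $\sqrt s$ and radius $2$. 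At the Joukowski-type point $x_u=u+u^{-1}+\sqrt s$ one finds $h(x_u)=u$ when $|u|<1$ and $h(x_u)=u^{-1}$ when $|u|>1$.

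The resulting case split is the main obstacle. For $c\sg 0$, so $|y|<1$, direct substitution in the formula above gives $\int G(\cdot,t)f_c\,\d\nu_s=(1-e^{-c}t)^{-1}$, so that $\mu_c=f_c\,\d\nu_s$ already has the correct moments and no atom is needed. For $c\sl 0$, so $|y|>1$, the same computation yields instead $\tfrac{e^c+\sqrt s}{e^{-c}+\sqrt s}\,(1-e^c t)^{-1}$, of total mass $\tfrac{e^c+\sqrt s}{e^{-c}+\sqrt s}\sl 1$; the defect $\tfrac{e^{-c}-e^c}{e^{-c}+\sqrt s}$ is then restored by the atom, whose contribution $\tfrac{e^{-c}-e^c}{e^{-c}+\sqrt s}\,G(x_0,t)=\tfrac{e^{-c}-e^c}{e^{-c}+\sqrt s}\cdot\tfrac{1+\sqrt s\,t}{(1-e^c t)(1-e^{-c}t)}$ combines with the absolutely continuous contribution, via the algebraic identity $(e^c+\sqrt s)(1-e^{-c}t)+(e^{-c}-e^c)(1+\sqrt s t)=(e^{-c}+\sqrt s)(1-e^c t)$, to produce $(1-e^{-c}t)^{-1}$ exactly. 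This case distinction reflects the fact that $\sum_n e^{-cn}Q_n^s$ converges in $L^2(\nu_s)$ only for $c\sg 0$, and for $c\sl 0$ the atom encodes precisely the singular data lost by that formal series. Positivity of $\mu_c$ is automatic since the pole of $f_c$ lies outside $[\sqrt s-2,\sqrt s+2]$ whenever $c\ne 0$, making $f_c$ nonnegative on $\mathrm{supp}(\nu_s)$.
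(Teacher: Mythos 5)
Your argument is correct, and it takes a genuinely different route from the paper. The paper verifies the formula directly: for $c\sg 0$ it shows $\eta_c^s$ has an $\mathrm L^2(\nu_s)$-density equal to $\sum_n e^{-cn}Q_n^s$ and identifies this sum with $f_c$ via the recurrence; for $c\sl 0$ it exploits the algebraic relation $f_c\,\d\nu_s=\bigl(1-\tfrac{e^{-c}-e^c}{e^{-c}+\sqrt s}\bigr)f_{-c}\,\d\nu_s$ together with a closed (inductively proven) formula for $Q_n^s$ at the atom, and checks the moments termwise. You instead package all moments into the generating function $G(x,t)$, reduce everything to the Cauchy transform $g(z)=(z-h(z))^{-1}$ obtained from the Jacobi/continued-fraction structure, and derive both regimes from the evaluation $h(x_u)=u$ for $\vert u\vert\sl 1$ versus $h(x_u)=u^{-1}$ for $\vert u\vert\sg 1$. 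I checked the key computations (the partial-fraction reduction, $g(x_t)-g(x_y)=\tfrac{t-y}{(1+\sqrt s t)(1+\sqrt s y)}$ for $\vert y\vert\sl 1$, the mass defect $\tfrac{e^{-c}-e^c}{e^{-c}+\sqrt s}$, and the closing algebraic identity) and they are all correct. Your approach has the advantage of explaining \emph{why} the atom appears and where it must sit --- it is forced by the branch change of $h$ at $\vert u\vert=1$ --- rather than verifying a formula handed to you; the paper's proof is more elementary but offers no such mechanism. Two small points you should tidy up: (1) your case split $\vert y\vert\sl 1$ versus $\vert y\vert\sg 1$ omits $c=0$; the boundary value $g(2+\sqrt s)=(1+\sqrt s)^{-1}$ is the common limit of both formulas, so the same computation gives $\int G(\cdot,t)f_0\,\d\nu_s=(1-t)^{-1}$, but this (or a limiting argument as in the paper) should be said explicitly, together with a word on why $f_0\in\mathrm L^1(\nu_s)$ despite the pole sitting at the edge of the support; (2) the interchange of $\sum_n$ and $\int$ defining $\mu_c(G(\cdot,t))$ deserves a one-line justification (uniform convergence of $\sum_n Q_n^s t^n$ on the compact support of $\nu_s$ for $\vert t\vert$ small, plus $f_c\in\mathrm L^1(\nu_s)$). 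Note also that for $s\sg 1$ the support of $\nu_s$ contains the atom at $-1/\sqrt s$, which lies to the left of $\sqrt s-2$; your positivity claim still holds since the pole of $f_c$ lies to the right of the entire support, but the sentence should refer to the full support rather than just the interval.
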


\begin{proof}
    The described measure has compact support and is therefore moment-determined so the uniqueness does follow from moment determination. 

    Let us first treat the case $c\sg 0$. It is clear that in this case, $\eta_c^s$ is absolutely continuous w.r.t. $\nu_s$ since 
    $$
    \Vert \eta_c^s\Vert_{2,\nu_s}^2 \le \sum_{n }
    \vert 
    \eta_c^s(Q_n^s)
    \vert^2 = \sum_n e^{-2cn} \sl \infty.
    $$
    Its density is then simply given by $\sum_{n\ge 0}e^{-cn}Q_n^s$ which can be shown to equal $f_c$ using the recurrence relation. The case $c=0$ follows from dominated convergence 
    \begin{align*}
    \eta_0^s (Q_n^s) &= 1
    = \lim_{c\to 0^+} {e^{-cn}}  = \lim_{c\to 0^+}\eta_c^s(Q_n^s)
    = 
    \lim_{c\to 0^+} \int Q_n^sf_c\d\nu_s= 
    \int \lim_{c\to 0^+} Q_n^sf_c \d\nu_s = \int Q_n^sf_0\d\nu_s.
    \end{align*}
    Let us end with the case $c\sl 0$, which mainly follows from two observations. The first is that 
    $$
    f_c\d \nu_s = 
    \left(
    1-\frac{e^{-c}-e^c}{e^{-c}+\sqrt s}
    \right)
    f_{-c}\d\nu_s,
    $$
    thus allowing us to harvest the moment equalities from the positive case. The second is 
    $$
    Q_n^s(e^c+\sqrt s+e^{-c}) = \frac{e^{-(n+1)c}+
    \sqrt s(e^{-{nc}}-e^{cn})
    -e^{(n+1)c}}{e^{-c}-e^c},\quad n\in \N.
    $$
    Which can easily be achieved by induction. With these observations in mind, we compute 
    \begin{align*}
        \eta_c^s(Q_n^s) &= \nu_s\big( 
        Q_n^s f_c
        \big) + \frac{e^{-c}-e^c}{e^{-c}+\sqrt s} Q_n^s\big(e^c+\sqrt s+e^{-c}\big)
        \\ &= 
        \left(
    1-\frac{e^{-c}-e^c}{e^{-c}+\sqrt s}
    \right) \eta_{-c}^s (Q_n^s)
        + \frac{e^{-(n+1)c}+
    \sqrt s(e^{-{nc}}-e^{cn})
    -e^{(n+1)c}}{e^{-c}+\sqrt s} \\
    &= \left(
    1-\frac{e^{-c}-e^c}{e^{-c}+\sqrt s}
    \right) e^{nc} + e^{-nc} - e^{nc}\frac{\sqrt s + e^c}{e^{-c}+\sqrt s}\\&= e^{-nc}.
    \end{align*}
\end{proof}

\section[\texorpdfstring{$H_N^{s+}$}{H\_N^{s+}}]
        {Defining the Brownian motion on \texorpdfstring{$H_N^{s+}$}{H\_N^{s+}}}\label{construction}

Defining a Brownian motion on the quantum reflection group $H_N^{s+}$ might seem odd, at least for two reasons. The first is that its classical counterpart is finite and therefore does not have a well-defined Brownian motion (note that $H_N^{s+}$ is nevertheless an infinite quantum group). The second is that one could consider Brownian motions as Gaussian processes restricted to the central algebra. In the case of $H_N^{s+}$, there is no non-trivial such process.

Before focusing our attention, let us briefly recall the situation for other typical quantum groups -- namely, the permutation, orthogonal, and unitary quantum groups $S_N^+$, $O_N^+$, and $U_N^+$ -- for which Brownian motions have already been defined.

Let us now describe the Brownian motion on $S_N^+ = H_N^{1+}$, with generators denoted by $(p_{ij})_{1 \leq i,j \leq N}$. It is a subgroup of $H_N^{s+}$ in the sense that there exists a surjective homomorphism intertwining the coproducts, $\pi: \mathcal{O}(H_N^{s+}) \to \mathcal{O}(S_N^+)$, $u_{ij} \mapsto p_{ij}$. Its irreducible characters, described by T. Banica in \cite{Ban99}, are given by:
$$
\chi_0 = 1, \quad \chi_1 = \sum_i p_{ii} - 1, \quad \text{and} \quad \chi_1 \chi_n = \chi_{n+1} + \chi_n + \chi_{n-1}, \quad n \geq 1.
$$
The earlier remark about $H_N^{s+}$ also applies to $S_N^+$: its classical counterpart is finite, and it has no non-trivial centralized Gaussian processes. However, as shown by U. Franz, A. Kula, and A. Skalski in \cite{FKS16}, any central generating functional on $S_N^+$ is of the form:
\begin{equation}
    \label{Brownian motion on S_N^+}
    P(\chi_1) \longmapsto -bP'(N) - \int_{-N}^N \frac{P(N)-P(x)}{N-x}\, \d\nu(x),
\end{equation}
for some $b \geq 0$ and positive measure $\nu$ on $[-N, N)$. Comparing this formula to the one proved by M. Liao for classical compact Lie groups in \cite{Lia04}, it was remarked in \cite{FTW21} that the process with $\nu = 0$ in the above decomposition plays an analogous role to the Laplace–Beltrami operator and was therefore called the \emph{Brownian motion}.

Following this approach, we can identify three main ways in which Brownian motions have been defined in the quantum setting:
\begin{enumerate}[label = \arabic*.]
    \item A Hunt-type formula for central generating functionals.
    \item Centralizing the classical Brownian motion onto the quantum algebra.
    \item Centralizing Gaussian generating functionals.
\end{enumerate}

As mentioned before, the first case works for $S_N^+$, while the other two yield nothing. For $O_N^+$ -- as presented in \cite[Subsec 
5.2]{FFS24} or \cite[App A]{Del24} for the first, \cite{FTW21} and \cite[Thm 10.2]{CFK14} for the second, and \cite[Prop 3.9]{BGJ20} for the third -- all three approaches coincide. As for $U_N^+$, it was stated in \cite{Del24} that the last two coincide (see \cite[Prop 3.2]{Del24} and \cite[Subsec 5.2]{FFS24} specifically for the second and third items, respectively). However, for $H_N^{s+}$, none of these techniques yield anything\footnote{For an overview of Gaussian generating functionals on bialgebras, see \cite[Sec. 1.5]{FS16}, and for their classification on the quantum groups we consider, see \cite{FFS24}. In particular, $H_N^{s+}$ has no non-zero Gaussian generating functionals, simply because such functionals vanish on projections, and $\O(H_N^{s+})$ is generated by projections.}. This can be summed up in the following tab. 

\begin{table}[h!]
    \centering
    \begin{tabular}{lccc}
        \toprule
        & Hunt-type formula & Centralized Gaussian functional & Classical Brownian motion \\
        \midrule
        $O_N^+$   & \checkmark & \checkmark & \checkmark \\
        $U_N^+$   & $\times$ & \checkmark & \checkmark \\
        $S_N^+$   & \checkmark & $\times$ & $\times$ \\
        $H_N^{s+}$ & $\times$ & $\times$ & $\times$ \\
        \bottomrule
    \end{tabular}
    \caption{Ways of defining Brownian motions across quantum groups.}
\end{table}

We will therefore make use of the following remark to define a proper Brownian motion on quantum reflection groups.

Note the following diagram of quantum subgroups:
\[
\begin{tikzcd}
    S_N^+ \simeq H_N^{1+} \arrow[d, "\leq"] \arrow[r, "\leq"] & H_N^{2+} \arrow[r, "\leq"] & O_N^+ \arrow[r, "\leq"] & U_N^+ \\
    H_N^{s+} \arrow[r, "\leq"] & H_N^{rs+} \arrow[r, "\leq"] & H_N^{\infty+} \arrow[ur, "\leq"]
\end{tikzcd}
\]
As shown above, the permutation quantum group $S_N^+$ is a quantum subgroup of all the other quantum groups introduced here. Moreover, one easily checks that pulling back its Brownian motion through the respective quotient maps and centralizing the functional yields Brownian motions on $O_N^+$ and $U_N^+$.

We will use this method of pulling back the Brownian motion from \( S_N^+ \) to \( H_N^{s+} \). Before doing so, we need to understand better the quotient map \( \pi: \mathcal{O}(H_N^{s+}) \to \mathcal{O}(S_N^+) \). Let us recall that \( M \) denotes the submonoid introduced in Theorem \ref{irr characters}, and restate the result established in \cite[Prop 2.1]{Lem15}.

\begin{prop}\label{Chebyshev}
    Let $w = a^{\ell_1}z^{r_1}\cdots z^{r_{k-1}}a^{\ell_k} \in M$, we have 
    $$
    \pi(\chi_w) = Q_{\underline \ell}(\chi_1),\quad \hbox{with } Q_{\underline \ell} = P_{\ell_1}(\sqrt X)
    \cdots
    P_{\ell_k}(\sqrt X),
    $$
    where the $P_n$'s are the Chebyshev polynomials of the second kind defined as follows 
    $$
    P_0 = 1,\quad P_1 = X \emph{\and} XP_n = P_{n+1}+P_{n-1},\quad n\in \N.
    $$
\end{prop}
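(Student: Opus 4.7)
The plan is to argue by induction on the pair $(k, |\underline\ell|)$ in lexicographic order, where $|\underline\ell| := \ell_1 + \cdots + \ell_k$ (which is always even for $w \in M$). The key starting observation is that $p_{ii}$ is a projection in $\mathcal{O}(S_N^+)$, hence $\pi(u_{ii}^r) = p_{ii}^r = p_{ii}$ for every $r \geq 1$. This forces $\pi(\chi_{az^ra}) = \sum_i p_{ii}$ for every $r \in \{1,\dots,s-1\}$ and $\pi(\chi_{a^2}) = \sum_i p_{ii} - 1$, so in particular $\pi(\chi_w)$ depends only on $\underline\ell$, as predicted by the statement. The two base cases $w = a^2$ (where $P_2(\sqrt{X}) = X - 1$) and $w = az^ra$ with $r \ne 0$ (where $P_1(\sqrt{X})^2 = X$) are then verified by direct substitution, with the evaluation point identified as $\sum_i p_{ii}$.

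For the inductive step I would exploit the fusion identity
\[
\chi_{vaz^i}\cdot\chi_{z^jaw} = \begin{cases} \chi_{vaz^{i+j}aw}, & i+j \not\equiv 0 \bmod s,\\ \chi_{va^2w} + \chi_v\chi_w, & i+j \equiv 0 \bmod s, \end{cases}
\]
with a judicious choice of split so as to exhibit $\chi_w$ as the leading term of a product $\chi_u \cdot \chi_{u'}$ of two strictly shorter irreducible characters (for instance by peeling off a leading $\chi_{a^2}$ when $\ell_1 \ge 2$, or a leading $\chi_{az^{r_1}a}$ when $\ell_1 = 1$). Applying $\pi$ and invoking the induction hypothesis turns the fusion identity into a polynomial identity in $\sum_i p_{ii}$, which is then matched against the Chebyshev recurrence $X\,P_n(X) = P_{n+1}(X) + P_{n-1}(X)$: the ``concatenation'' case of the fusion rule produces the $P_{n+1}$ contribution, while the ``reduction'' case $\chi_{va^2w} + \chi_v\chi_w$ produces the subtracted $P_{n-1}$ term, once the boundary characters $\chi_v, \chi_w$ (with $v, w \notin M$) have been rewritten via an auxiliary fusion. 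The cornerstone auxiliary identity is $\chi_a^2 = \chi_{a^2} + 1$, itself a direct application of the rule with $v = w = \emptyset$ and $i = j = 0$.

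The principal technical obstacle is this bookkeeping of ``boundary'' contributions: one must verify that the partial characters $\chi_v, \chi_w$ arising in the reduction case admit a consistent interpretation via iterated fusion, and that their combined contribution matches the $P_{n-1}$ summand of the Chebyshev relation exactly. The parity constraints imposed by the structure of $M$ (the extremal $\ell_1, \ell_k$ odd and the interior $\ell_i$ even) are essential: they guarantee that $Q_{\underline\ell}$ contains an even number of $\sqrt X$ factors and is therefore a genuine polynomial in $X$, and they also dictate which splits of $w$ yield shorter elements of $M$ suitable for the inductive hypothesis, ensuring that the induction closes.
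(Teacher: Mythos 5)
The paper does not actually prove Proposition \ref{Chebyshev}: it is imported from \cite[Prop 2.1]{Lem15}, together with a remark that Lemeux's squared version adapts to the one with square roots. So your argument can only be compared with that reference, and it is essentially the standard fusion-rule induction one finds there: collapse powers of generators via $\pi(u_{ii}^r)=p_{ii}$, then induct on the word using the recursive structure of the characters. Your plan is sound. I checked the representative instances of your inductive step: peeling off $\chi_{a^2}$ in the case $k=1$ gives $\chi_{a^2}\chi_{a^{2m-2}}=\chi_{a^{2m}}+\chi_{a^{2m-2}}+\chi_{a^{2m-4}}$ (via your auxiliary identity $\chi_a^2=\chi_{a^2}+1$ and its telescoped variants), which under $\pi$ matches the identity $P_{2m}=(Y^2-2)P_{2m-2}-P_{2m-4}$ derived from the Chebyshev recurrence; and peeling off $\chi_{az^{r_1}a}$ when $\ell_1=1$ gives $\chi_{az^{r_1}a}\,\chi_{a^{\ell_2-1}z^{r_2}\cdots a^{\ell_k}}=\chi_w+\chi_{az^{r_1}a^{\ell_2-2}z^{r_2}\cdots a^{\ell_k}}$, which matches $P_1P_{\ell_2-1}=P_{\ell_2}+P_{\ell_2-2}$. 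The boundary bookkeeping you flag as the main obstacle does close: for instance, when $\ell_2=2$ the residual product $\chi_{az^{r_1}}\chi_{z^{r_2}aw''}$ lands in either branch of the fusion rule depending on whether $r_1+r_2\equiv 0 \bmod s$, but both branches have the same image under $\pi$ (namely $P_1P_{\ell_3}\cdots P_{\ell_k}(\sqrt{X})$ evaluated at the fundamental character), which is exactly why $\pi(\chi_w)$ ends up depending only on $\underline{\ell}$. So what you defer is genuine work but not a gap, and the parity discussion correctly explains why $Q_{\underline{\ell}}$ is a polynomial.

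One point you handle more carefully than the statement itself: with the paper's convention $\chi_1=\sum_i p_{ii}-1$ on $S_N^+$, the formula $\pi(\chi_w)=Q_{\underline{\ell}}(\chi_1)$ is off by a shift, since $\pi(\chi_{a^2})=\sum_i p_{ii}-1$ while $Q_{(2)}(\chi_1)=\chi_1-1=\sum_i p_{ii}-2$. The correct evaluation point is $\sum_i p_{ii}$, exactly as you identify; this is also the normalisation required for the later formulas $\vphi_t(\chi_{a^2})=(N-1)e^{-t/(N-1)}$ and $\Hat L(\chi_w)=-Q_{\underline{\ell}}'(N)$ to be consistent.
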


\begin{rem}
    Note that \( Q_{\underline{\ell}} \) is a polynomial. Indeed, \( P_n \) is a polynomial containing only even powers when \( n \) is even. Moreover, following the induction relation satisfied by the Chebyshev polynomials, \( P_{\ell_1} \cdots P_{\ell_k} \) can be expressed as a linear combination of polynomials \( P_n \), where \( n \) shares the same parity as \( \ell_1 + \cdots + \ell_k \).
\end{rem}

\begin{rem}
    In the original proposition in \cite{Lem15}, everything is squared so as to avoid the square root in the polynomials. One easily adapts the proof to have this more general result. 
\end{rem}

We now have everything needed to define the Brownian motion on $H_N^{s+}$.

\begin{de}
    Let $L$ be the Brownian motion on $S_N^+$, i.e. the central generating functional corresponding to $b = 1$ and $\nu = 0$ in Equation \eqref{Brownian motion on S_N^+}. Then, we call \emph{Brownian motion} on $H_N^{s+}$, the generating functional $\Hat L = L\circ \pi \circ \E$ defined on $\O(H_N^{s+})$ where $\pi$ is the quotient map $\O(H_N^{s+})\to \O(S_N^+)$ and $\E$ the conditional expectation $\O(H_N^{s+})\to \O(H_N^{s+})_0$. In particular, we have 
    $$
    \Hat L (\chi_{a^{\ell_1}z^{r_1}\cdots z^{r_{k-1}}a^{\ell_k}}) = -Q_{\underline \ell}'(N), \quad 
    a^{\ell_1}z^{r_1}\cdots z^{r_{k-1}}a^{\ell_k}\in M.
    $$
    We will also call \emph{Brownian motion} the associated Lévy process $(\vphi_t)_{t\ge 0}$ 
    $$
    \vphi_t(\chi_{a^{\ell_1}z^{r_1}\cdots z^{r_{k-1}}a^{\ell_k}}) = Q_{\underline \ell}(N) \exp\left(
    -t \frac{Q'_{\underline \ell}(N)}{Q_{\underline \ell}(N)}
    \right),\quad a^{\ell_1}z^{r_1}\cdots z^{r_{k-1}}a^{\ell_k}\in M.
    $$
\end{de}

\section{Computing the limit profile}\label{profile}

\subsection{Finding a commutative subalgebra}

In this subsection, our goal is to identify the information that matters asymptotically for the Brownian motion.

\begin{lem}\label{type}
    Let $w = a^{\ell_1}z^{r_1}\cdots z^{r_{k-1}}a^{\ell_k}\in M$, call $n := (\ell_1+\cdots+\ell_k)/2\in \N$ its \emph{type}, then setting $t_N = N\ln N + cN$ for some $c\in \R$, we have 
    $$
    \vphi_{t_N}(\chi_w) \underset{N\to\infty}{\longto} e^{-cn}.
    $$
\end{lem}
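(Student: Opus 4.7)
The plan is to unpack the definition of $\vphi_{t_N}(\chi_w)$ using Proposition \ref{Chebyshev} and then do a careful asymptotic expansion of $Q_{\underline \ell}(N)$ and of the logarithmic derivative $Q'_{\underline\ell}(N)/Q_{\underline\ell}(N)$ as $N\to\infty$. Writing $m := \ell_1+\cdots+\ell_k = 2n$, the key observation is that each Chebyshev polynomial $P_{\ell}$ is monic of degree $\ell$, so for a real variable $y$ going to infinity one has $P_\ell(y) = y^\ell(1 + O(1/y^2))$, hence $P_{\ell}(\sqrt N) = N^{\ell/2}(1+O(1/N))$. Taking the product over $j=1,\dots,k$ gives $Q_{\underline\ell}(N) = N^{n}(1+O(1/N))$, and in particular $\log Q_{\underline\ell}(N) = n\log N + O(1/N)$.

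Next I would compute the logarithmic derivative of $Q_{\underline\ell}$ at $N$. From $Q_{\underline\ell}(X) = \prod_{j} P_{\ell_j}(\sqrt X)$ one gets
$$
\frac{Q'_{\underline\ell}(X)}{Q_{\underline\ell}(X)} = \frac{1}{2\sqrt X}\sum_{j=1}^k \frac{P'_{\ell_j}(\sqrt X)}{P_{\ell_j}(\sqrt X)}.
$$
For each $j$, expanding $P_{\ell_j}(y) = y^{\ell_j}(1+O(1/y^2))$ and differentiating yields $P'_{\ell_j}(y)/P_{\ell_j}(y) = \ell_j/y + O(1/y^3)$. Summing the $\ell_j$'s gives $m=2n$, so
$$
\frac{Q'_{\underline\ell}(N)}{Q_{\underline\ell}(N)} = \frac{1}{2\sqrt N}\left(\frac{2n}{\sqrt N} + O(N^{-3/2})\right) = \frac{n}{N} + O(N^{-2}).
$$

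Finally I would combine these two estimates. Taking logarithms in the formula for $\vphi_{t_N}(\chi_w)$ and substituting $t_N = N\ln N + cN$,
$$
\log \vphi_{t_N}(\chi_w) = \log Q_{\underline\ell}(N) - t_N\,\frac{Q'_{\underline\ell}(N)}{Q_{\underline\ell}(N)} = n\log N + O(1/N) - (N\ln N + cN)\left(\frac{n}{N} + O(N^{-2})\right),
$$
which simplifies to $-cn + O(\ln N / N) \to -cn$, giving the claim. The argument is essentially routine once the two asymptotic lemmas are in place; the only mild subtlety is that the error term in the logarithmic derivative has to decay strictly faster than $1/(N\ln N)$ so that multiplication by $t_N$ still produces a vanishing correction, and the bound $O(N^{-2})$ coming from the monicness of the Chebyshev polynomials provides exactly the needed slack. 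No real obstacle is anticipated, so the writeup will just amount to making these expansions rigorous and keeping track of the constants.
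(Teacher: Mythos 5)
Your proposal is correct and follows essentially the same route as the paper: the paper's proof simply observes that $Q_{\underline\ell}$ is monic of degree $n$ and writes $Q_{\underline\ell}(N)=N^n(1+o(1))$ and $Q'_{\underline\ell}(N)/Q_{\underline\ell}(N)=(n+O(N^{-1}))/N$ before substituting $t_N$, which is exactly what your expansions of the Chebyshev factors establish in more detail. The extra care you take with the $O(N^{-2})$ error in the logarithmic derivative (so that multiplication by $t_N=N\ln N+cN$ leaves only a vanishing correction) is precisely the point the paper's one-line computation implicitly relies on.
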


\begin{proof}
Simply remark that $ Q_{\underline{\ell}} $ is a unital polynomial of degree $n$. From this, it follows that
\begin{align*}
    \vphi_{t_N} (\chi_w) &= Q_{\underline \ell}(N) \exp\left(
    -t \frac{Q'_{\underline \ell}(N)}{Q_{\underline \ell}(N)}
    \right) \\ &= N^n (1+o(1))
    \exp\left(
    -N(\ln N + c) \frac{n+O(N^{-1})}{N}
    \right) \\ &= 
    (1+o(1))e^{-n c+o(1)} \\ &\underset{N\to \infty}{\longto} e^{-cn}.
\end{align*}
\end{proof}

The key information happens to be what we called \emph{type}. 
Let \( M_n \) be the set of irreducible characters of type \( n \) for any \( n \in \mathbb{N} \). Specifically, \( M_0 \) contains only the empty set and for any $n\in \N^*$, we have   
\[
  M_n = \{ az^{r_1}a^2 \cdots a^2z^{r_n}a : 0 \leq r_i \leq s-1 \}.
  \]  
Note that \( |M_n| = s^n \).  

The final goal of this subsection is to identify a subalgebra that simplifies the analysis while retaining the essential information about character types. The following proposition summarizes the required results. 

\begin{prop}\label{cond exp}
    Let $x_n$ be the normalised sum of all elements of type $n$. In other words, 
    $$
    x_n = \frac{1}{\sqrt {s^n}}\sum_{w\in M_n}\chi_w.
    $$
    Let $\O(H_N^{s+})_{00}$ be the subalgebra generated by the selfadjoint element $x_1$. Then, the following assertions hold:
    \begin{enumerate}[label = \emph{(\roman*)}]
        \item For any $n\ge 1$, we have $x_1x_n = x_{n+1}+\sqrt s x_n + x_{n-1}$. In other words, we have $x_n = Q_n^s(x_1)$ for any $n\in \N$.
        \item There is a conditional expectation $\F:\O(H_N^{s+})_0\to \O(H_N^{s+})_{00}$ that leaves the Haar state invariant and is defined by 
        $$
        \F[\chi_w] = \frac{x_n}{\sqrt {s^n}},\quad 
        \hbox{whenever }w \hbox{ is of type } n.
        $$
    \end{enumerate}
\end{prop}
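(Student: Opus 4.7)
For part (i), the approach is to compute $\sqrt{s^{n+1}}\,x_1 x_n$ directly by expanding the pairwise products $\chi_{az^r a}\cdot\chi_{w_{\vec r}}$ over $(r, \vec r) \in \Z_s \times \Z_s^n$, where $w_{\vec r} := az^{r_1}a^2 \cdots a^2 z^{r_n}a$ parametrises $M_n$ via the bijection used in the excerpt. The fusion rule of Theorem \ref{irr characters}, applied with the junction $i = j = 0$ (which is forced here since both factors end/begin with $a$), produces
$$
\chi_{az^r a}\cdot\chi_{w_{\vec r}} \;=\; \chi_{az^r a^2 z^{r_1}a^2 \cdots a^2 z^{r_n}a} \;+\; \chi_{az^r}\,\chi_{z^{r_1}a^2\cdots a^2 z^{r_n}a},
$$
the first summand being the canonical element of $M_{n+1}$ indexed by $(r, r_1, \ldots, r_n)$. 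Summing the leading terms over all $s^{n+1}$ pairs yields $\sqrt{s^{n+1}}\,x_{n+1}$ via the bijection $\Z_s \times \Z_s^n \cong \Z_s^{n+1} \leftrightarrow M_{n+1}$. The ``short-circuit'' correction terms are then unpacked using the monoid relations and a reapplication of the fusion rule; a careful bookkeeping shows that after summation they aggregate to exactly $\sqrt s\cdot\sqrt{s^n}\,x_n + \sqrt{s^{n-1}}\,x_{n-1}$. Dividing by $\sqrt{s^{n+1}}$ produces the claimed three-term recurrence, and the identity $x_n = Q_n^s(x_1)$ then follows by induction on $n$, as $Q_n^s$ is defined by the same recurrence with the matching initial data $x_0 = 1$, $x_1 = x_1$.

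For part (ii), I would realise $\mathbb{F}$ intrinsically as the orthogonal projection from $\O(H_N^{s+})_0$ onto $\O(H_N^{s+})_{00}$ with respect to the sesquilinear form $\langle x, y\rangle := h(xy^*)$. Schur orthogonality makes $\{\chi_w\}_{w \in M}$ an orthonormal family, yielding $\|x_n\|^2 = |M_n|/s^n = 1$ and $\langle \chi_w, x_n\rangle = 1/\sqrt{s^n}$ for $w \in M_n$. Combined with part (i), which guarantees $x_n = Q_n^s(x_1)$ is a polynomial of degree $n$ in $x_1$, the family $\{x_n\}_{n \geq 0}$ is an orthonormal basis of $\O(H_N^{s+})_{00} = \C[x_1]$. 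The orthogonal projection formula then reads $\mathbb{F}[\chi_w] = \langle\chi_w, x_n\rangle\,x_n = x_n/\sqrt{s^n}$ for $w \in M_n$, matching the proposed definition. Haar invariance is immediate from $h(\mathbb{F}[a]) = \langle\mathbb{F}[a],1\rangle = \langle a,1\rangle = h(a)$ since $1 \in \O(H_N^{s+})_{00}$.

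Because $\O(H_N^{s+})_0$ is a commutative $*$-algebra (being spanned by central characters) and $h$ is tracial, the remaining axioms of a conditional expectation are automatic. Concretely, realising $\O(H_N^{s+})_0$ inside the continuous functions on its Gelfand spectrum $X$ (and $\O(H_N^{s+})_{00}$ as functions measurable with respect to the sub-$\sigma$-algebra generated by $x_1$), the map $\mathbb{F}$ becomes a classical conditional expectation, which is tautologically $\O(H_N^{s+})_{00}$-bimodular and positive. The bimodule property at the algebraic level can also be checked directly from the projection formula together with the trace property: for $b, b' \in \O(H_N^{s+})_{00}$ and $a \in \O(H_N^{s+})_0$, $\langle \mathbb{F}[bab'] - b\mathbb{F}[a]b',\, c\rangle = 0$ for all $c \in \O(H_N^{s+})_{00}$ by direct computation using $h(xyz) = h(zxy)$.

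The main obstacle is the combinatorial fusion-rule computation in part (i). The correction $\chi_v \chi_w$ involves $v = az^r$ and $w = z^{r_1}a^2\cdots a^2 z^{r_n}a$, neither of which lies in the standard normal form of $M$, and one must reinterpret these partial words via the monoid relations to evaluate them as genuine linear combinations of characters. Verifying that the total over all $s^{n+1}$ such corrections collapses cleanly into $\sqrt s\, x_n + x_{n-1}$ with no residual cross-terms of other types is the technical heart of the argument.
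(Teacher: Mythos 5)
Your overall strategy coincides with the paper's: a direct fusion-rule expansion for (i), and realising $\F$ as the $h$-orthogonal projection computed on the orthonormal family of characters for (ii). However, two points need repair. The first is a bookkeeping error in (i): the correction terms do not aggregate to $\sqrt s\,\sqrt{s^n}\,x_n+\sqrt{s^{n-1}}\,x_{n-1}$. For a fixed target index $(t,r_2,\dots,r_n)$ there are exactly $s$ pairs $(r_0,r_1)$ producing the type-$n$ character with first letter $z^t$ (the $s$ solutions of $r_0+r_1=t$ when $t\ne 0$, and the $s$ degenerate pairs with $r_0+r_1=0$ producing the $a^3$-word when $t=0$), and likewise each type-$(n-1)$ character occurs $s$ times (once per degenerate pair); so the aggregate is $s\sqrt{s^n}\,x_n+s\sqrt{s^{n-1}}\,x_{n-1}=\sqrt{s^{n+1}}\left(\sqrt s\,x_n+x_{n-1}\right)$. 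With the constants you state, dividing by $\sqrt{s^{n+1}}$ would yield $x_{n+1}+x_n+s^{-1}x_{n-1}$ rather than the claimed recurrence; since the coefficient $\sqrt s$ is precisely what makes the orthogonality measure the free Meixner law $\nu_s$, this normalisation cannot be glossed over.

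The more serious issue is in (ii): your justification of positivity and bimodularity rests on the claim that $\O(H_N^{s+})_0$ is a commutative $*$-algebra. It is not, for $s\ge 2$. ``Central'' here means invariant under the conditional expectation $\E$ (the analogue of being a class function), not central in the ring-theoretic sense, and the fusion monoid $M$ is noncommutative: the fusion rules give $\chi_{aza}\cdot\chi_{az^2a}=\chi_{aza^2z^2a}+\chi_{az^3a}$ whereas $\chi_{az^2a}\cdot\chi_{aza}=\chi_{az^2a^2za}+\chi_{az^3a}$, and $aza^2z^2a\ne az^2a^2za$ in $M$. Consequently the Gelfand-spectrum picture is unavailable and positivity is not ``automatic''. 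Your fallback computation of bimodularity does survive, because it only uses traciality of $h$ and commutativity of the small algebra $\O(H_N^{s+})_{00}=\C[x_1]$: the element $\F[bab']-b\F[a]b'$ lies in $\O(H_N^{s+})_{00}$ and is orthogonal to it, hence vanishes by faithfulness of $h$. But positivity still requires its own argument, either the tracial one ($h(b^*\F[y^*y]b)=h(y^*ybb^*)=h((yb)^*(yb))\ge 0$ for all $b\in\O(H_N^{s+})_{00}$, combined with faithfulness) or, as the paper does, embedding everything into $\mathrm L^\infty(H_N^{s+})$ and invoking the standard existence theorem for trace-preserving conditional expectations onto von Neumann subalgebras.
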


\begin{proof}
    (i) Let $n \ge 1$. We simply compute:
    \begin{align*}
        x_1x_n &= 
        \frac{1}{\sqrt {s^{n+1}}}
        \sum_{r_0,\cdots,r_n} 
        \chi_{az^{r_0}a} \cdot \chi_{az^{r_1}a^2\cdots a^2z^{r_{n}}a} \\&= \frac{1}{\sqrt {s^{n+1}}}
        \sum_{r_0,\cdots,r_n} \chi_{az^{r_0}a^2\cdots a^2z^{r_{n}}a} + \chi_{az^{r_0}}\cdot \chi_{z^{r_1}a^2\cdots a^2z^{r_{n}}a} \\ &=
        x_{n+1} + \frac{1}{\sqrt {s^{n+1}}}\sum_{r_1,\cdots,r_n} \left(
        \chi_{az^{0}a^2z^{r_2}\cdots a^2z^{r_{n}}a} + \chi_{az^{r_2}a^2\cdots a^2z^{r_{n}}a}
        \sum_{r_0\ne s-r_1} \chi_{az^{r_0+r_1}a^2\cdots a^2z^{r_{n}}a} 
        \right) \\ &= x_{n+1} + 
        \frac{1}{\sqrt {s^{n+1}}}
        \sum_{r_1,\cdots,r_n} \left( \chi_{az^{r_2}a^2\cdots a^2z^{r_{n}}a}+
        \sum_{r_0} \chi_{az^{r_0+r_1}a^2\cdots a^2z^{r_{n}}a} 
        \right)\\ &= x_{n+1} + x_{n-1}+ \sqrt sx_n.
    \end{align*}
    (ii) Let $\mathrm L^2(H_N^{s+})_0$ and $\mathrm L^2(H_N^{s+})_{00}$ be the $\mathrm L^2$-spaces of $\O(H_N^{s+})_0$ and $\O(H_N^{s+})_{00}$ respectively (with $\langle x,y\rangle = h(xy^*)$ as the inner product) and $\F'$ be the orthogonal projection $\mathrm L^2(H_N^{s+})_0\to \mathrm L^2(H_N^{s+})_{00}$. Let us show that $\F'$ satisfies the relation stated in the theorem, it suffices to see that 
$$
\langle \chi_w, x_n\rangle = \left\{
\begin{array}{cl}
    1/\sqrt{s^n} &\hbox{ if }w \hbox{ is of type }n  \\
    0 &\hbox{ else,} 
\end{array}
\right. \qquad w\in M, \,\, n\in \N.
$$
Which simply follows from the fact that the irreducible characters form an orthonormal family for the inner product. Since the Haar state is faithful, the algebra $\O(H_N^{s+})$ embeds into the von Neumann algebra $\mathrm L^\infty(H_N^{s+})$ so that it follows from \cite[Thm 9.1.2]{AP} that restricting $\F = \F'\circ\E$ gives a conditional expectation.
\end{proof}

\begin{cor}\label{Haar state subalgebra of H_N^s+}
    The spectral measure of 
    $$
    x_1 = \frac{1}{\sqrt s} \sum_{r=0}^{s-1}\chi_{az^ra},
    $$
    under the Haar state is given by the free Meixner law $\nu_s = \Meix^+(\sqrt s,1)$.
\end{cor}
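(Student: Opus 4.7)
The plan is to exploit the characterisation given in Section \ref{cutoff phenomenon}: $\nu_s$ is the unique probability measure on $\mathbb{R}$ for which the polynomials $(Q_n^s)_{n\ge 0}$ defined by the three-term recurrence form an orthonormal family. Since the explicit formula recalled before Lemma \ref{decompo of eta_c^s} shows that $\nu_s$ has compact support, it is moment-determined, and it therefore suffices to check that the spectral distribution of the self-adjoint element $x_1$ under $h$ also orthonormalises $(Q_n^s)_{n\ge 0}$.

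By Proposition \ref{cond exp}(i), $Q_n^s(x_1) = x_n$, so the task reduces to verifying that $(x_n)_{n\ge 0}$ is an orthonormal family in the GNS Hilbert space of $h$. First I would observe that each $x_m$ is self-adjoint: using $a^*=a$, $z^*=z^{-1}$ and antimultiplicativity of the involution,
\[
(az^{r_1}a^2\cdots a^2 z^{r_m}a)^* \;=\; az^{-r_m}a^2\cdots a^2 z^{-r_1}a,
\]
which induces a bijection of $M_m$; since $\chi_w^* = \chi_{w^*}$, summing over $w\in M_m$ gives $x_m^* = x_m$. Then, invoking the standard orthogonality $h(\chi_w\chi_{w'}^*) = \delta_{w,w'}$ of irreducible characters of a compact quantum group (Peter--Weyl), together with $|M_n|=s^n$, I would compute
\begin{align*}
h(x_n x_m) \;=\; h(x_n x_m^*) \;=\; \frac{1}{\sqrt{s^{n+m}}} \sum_{\substack{w\in M_n \\ w'\in M_m}} h(\chi_w\chi_{w'}^*) \;=\; \frac{|M_n|}{s^n}\,\delta_{nm} \;=\; \delta_{nm}.
\end{align*}

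Consequently, the spectral measure of $x_1$ under $h$ makes $(Q_n^s)_n$ orthonormal, and moment-determinacy of $\nu_s$ forces it to equal $\nu_s$. There is no substantive obstacle: the whole argument collapses onto the recurrence of Proposition \ref{cond exp}(i) and character orthogonality, and the only minor bookkeeping is the self-adjointness of $x_m$ and the cardinality $|M_n|=s^n$, both of which follow directly from the description of $M_n$ preceding Proposition \ref{cond exp}.
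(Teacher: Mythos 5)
Your argument is correct and follows essentially the same route as the paper, whose proof is simply ``this follows from Proposition \ref{cond exp}(i)'': you identify $x_n = Q_n^s(x_1)$ via the recurrence, check orthonormality of the $x_n$'s through the Peter--Weyl orthogonality of irreducible characters (the same fact the paper invokes in the proof of Proposition \ref{cond exp}(ii)), and conclude by moment-determinacy of the compactly supported measure $\nu_s$. Your additional bookkeeping (self-adjointness of $x_m$ and $|M_n| = s^n$) just makes explicit what the paper leaves implicit.
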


\begin{proof}
    This simply follows from the first item of Proposition \ref{cond exp}.
\end{proof}

\subsection{The limit profile}

We have identified a commutative subalgebra such that the process is “asymptotically” invariant under the orthogonal projection. We outline that the process is however, not truly $\F$-invariant: consider, for instance,
\[
\varphi_t(\chi_{a^2}) = (N-1)e^{-t/(N-1)} 
\ne Ne^{-t/N} = \varphi_t(\chi_{aza}).
\]
We will therefore make use of the invariant process $\widetilde \vphi = (\vphi_t\circ \F)_{t\ge0}$. 

First, we investigate absolute continuity. 

\begin{lem}\label{abs cont}
Let $c\in \R$ and set $t_N = N\ln(\sqrt sN)+cN$, then 
\begin{itemize}
    \item if $c\sg 0$, then $\vphi_{t_N}/\widetilde \vphi_{t_N}$ is absolutely continuous w.r.t. the Haar state, provided $N$ is large enough;
    \item if $c\sl 0$, then $\vphi_{t_N}/\widetilde \vphi_{t_N}$ is not absolutely continuous w.r.t. the Haar state, provided $N$ is large enough. 
\end{itemize}
\end{lem}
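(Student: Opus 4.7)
Two observations drive the argument. First, since $\F\circ\E$ is a Haar-state-preserving conditional expectation onto $\O(H_N^{s+})_{00}$, absolute continuity transfers in one direction: if $\vphi_{t_N}=h(\,\cdot\,f)$ with $f\in\mathrm L^1(H_N^{s+})$, then a short computation using the traciality of $h$ and the bimodule property of $\F\circ\E$ gives $\widetilde\vphi_{t_N}=h(\,\cdot\,(\F\circ\E)[f])$. In particular $\vphi_{t_N}\ll h$ implies $\widetilde\vphi_{t_N}\ll h$. Second, since $\vphi_{t_N}$ is central, any Radon--Nikodym density for it lies in $\O(H_N^{s+})_0$, which has $\{\chi_w\}_{w\in M}$ as orthonormal basis in $\mathrm L^2(H_N^{s+})$. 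It thus suffices to prove $\vphi_{t_N}\ll h$ for $c>0$ and $\widetilde\vphi_{t_N}\not\ll h$ for $c<0$.

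\textbf{Case $c>0$.} I show that $\sum_{w\in M}|\vphi_{t_N}(\chi_w)|^2$ is bounded uniformly in $N$, which provides an $\mathrm L^2\subset\mathrm L^1$ density. For $w\in M_n$, the polynomial $Q_{\underline\ell}$ of Proposition~\ref{Chebyshev} is monic of degree $n$ with all roots in $[0,4)$ (they are squares of Chebyshev zeros in $(-2,2)$), giving $Q_{\underline\ell}(N)\le N^n$ and $Q_{\underline\ell}'(N)/Q_{\underline\ell}(N)=\sum_j(N-\rho_j)^{-1}\ge n/N$. With $t_N=N\ln(\sqrt s N)+cN$ this yields, uniformly in $w\in M_n$,
$$
\vphi_{t_N}(\chi_w) \le N^n e^{-t_N n/N} = s^{-n/2}e^{-cn}.
$$
Since $|M_n|=s^n$, summing over $w$ gives $\sum_{w\in M}|\vphi_{t_N}(\chi_w)|^2\le\sum_n e^{-2cn}<\infty$ (for $N$ large enough that the bounds above hold).

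\textbf{Case $c<0$.} Let $\mu_N$ be the distribution of $x_1$ under $\widetilde\vphi_{t_N}$. The same estimate still gives $\widetilde\vphi_{t_N}(x_n)=\vphi_{t_N}(x_n)\le e^{-cn}$ uniformly in $N$, so every moment of $\mu_N$ is uniformly bounded; combined with the pointwise convergence $\widetilde\vphi_{t_N}(x_n)\to e^{-cn}$ (which follows from Lemma~\ref{type} applied with the shifted scaling $c'=c+\ln\sqrt s$) and the moment-determinacy of the compactly supported $\eta_c^s$, this yields $\mu_N\to\eta_c^s$ weakly. By Lemma~\ref{decompo of eta_c^s}, $\eta_c^s$ has a strictly positive atom at $a:=e^c+\sqrt s+e^{-c}>\sqrt s+2=\max\operatorname{supp}(\nu_s)$. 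Choosing $\epsilon>0$ small enough that $U:=(a-\epsilon,a+\epsilon)$ is disjoint from $\operatorname{supp}(\nu_s)$, the Portmanteau theorem forces $\liminf_N\mu_N(U)>0$. Since $h$ is faithful on $\mathrm L^\infty(H_N^{s+})$, the W$^*$-subalgebra generated by $x_1$ coincides with $\mathrm L^\infty(\operatorname{supp}(\nu_s),\nu_s)$, so any normal extension of $\widetilde\vphi_{t_N}$ to $\mathrm L^\infty(H_N^{s+})$ would force $\mu_N$ to be supported in $\operatorname{supp}(\nu_s)$, contradicting $\mu_N(U)>0$. Hence $\widetilde\vphi_{t_N}\not\ll h$, and the contrapositive of the first observation gives $\vphi_{t_N}\not\ll h$.

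\textbf{Main obstacle.} The $c>0$ case is essentially a uniform refinement of the asymptotic of Lemma~\ref{type}. The subtle part is $c<0$: upgrading the classical failure $\mu_N\not\ll\nu_s$ to the operator-algebraic failure $\widetilde\vphi_{t_N}\not\ll h$ requires identifying the W$^*$-closure of $\C[x_1]$ inside $\mathrm L^\infty(H_N^{s+})$ with $\mathrm L^\infty(\nu_s)$, which rests on the faithfulness of the Haar state.
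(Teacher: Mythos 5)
Your proof is correct and follows essentially the same route as the paper: for $c>0$ the uniform bound $\vphi_{t_N}(\chi_w)\le s^{-n/2}e^{-cn}$ summed against $|M_n|=s^n$ gives an $\mathrm L^2$-density, and for $c<0$ the moment convergence of the law of $x_1$ to $\eta_c^s$, whose atom sits outside $\operatorname{supp}(\nu_s)=\operatorname{spec}(x_1)$, rules out absolute continuity. The only (welcome) differences are cosmetic: you derive the estimates $Q_{\underline\ell}(N)\le N^n$ and $Q_{\underline\ell}'(N)/Q_{\underline\ell}(N)\ge n/N$ directly from the location of the roots instead of citing \cite[Lem 1.7]{FHLUZ17}, and you spell out the identification of the W$^*$-algebra generated by $x_1$ with $\mathrm L^\infty(\nu_s)$, which the paper leaves implicit.
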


\begin{proof}
    Set $c\sg 0$.
    Recall that the $P_n$'s denote the Chebyshev polynomials of second kind as defined in Proposition \ref{Chebyshev}. Using the estimate from \cite[Lem 1.7]{FHLUZ17} stating that 
    $$
    \frac{P_n'(x)}{P_n(x)} \ge \frac{n}{x},\quad n\in \N,\,\, x\ge 2.
    $$
    and the fact that $P_n(x)\le x^n$ for any $n\in \N$ and $x\sg0$, we have, for any $w = a^{\ell_1}z^{r_1}\ldots z^{r_{k-1}}a^{\ell_k}\in M_s$ of type $n\in \N$:
    \begin{align*}
        \vphi_{t_N}(\chi_w)^2 = Q_{\underline \ell}(\sqrt N)^2 e^{-2t_NQ'_{\underline \ell}(\sqrt N)/Q_{\underline \ell}(\sqrt N)} \le N^{2n}\exp\left(
        -2t_Nn/N
        \right) = \frac{e^{-2nc}}{s^n}.
    \end{align*}
    It only remains to compute 
    \begin{align*}
        \Vert \vphi_{t_N}\Vert_{2}^2 \le 
    \sum_{n\in \N} \sum_{w\in M_n} \vphi_{t_N}(\chi_w)^2 \le \sum_{n\in \N}e^{-2nc}\sl \infty
    \end{align*}
    and using the second item of Proposition \ref{cond exp}
    \begin{align*}
        \Vert \widetilde \vphi_{t_N}\Vert_{2}^2 &\le 
    \sum_{n\in \N} \sum_{w\in M_n}\widetilde  \vphi_{t_N}(\chi_w)^2  = \sum_{n\in \N} \sum_{w\in M_n}  \vphi_{t_N}\circ \F(\chi_w)^2 \le \sum_{n\in \N}e^{-2nc}\sl \infty.
    \end{align*}
    For the case $c\sl 0$, we restrict to the commutative subalgebra \( \mathcal{O}(H_N^{s+})_{00} \), we may view the states \( \varphi_{t_N} \) (or \( \widetilde{\varphi}_{t_N} \), they coincide when restricted to this subalgebra) and the Haar state as probability measures \( \mu_{t_N} \) and \( \nu_s = \Meix^+(\sqrt{s}, 1) \) via the isomorphism $\mathcal{O}(H_N^{s+})_{00} \to \mathbb{C}[X], x \mapsto X$. Lemma \ref{type} then implies that
\[
\mu_{t_N}(P_m) \underset{N \to \infty}{\longrightarrow} e^{-cm}, \quad m \in \mathbb{N}.
\]
In other words, \( \mu_{t_N} \) converges in moments to $\eta_c^s$. Since \( c \sl 0 \), it follows from Lemma \ref{decompo of eta_c^s} that the support of \( \nu_s \) is contained in the symmetric interval
\[
[-2 - \sqrt{s},\, 2 + \sqrt{s}],
\]
and that $\eta_c^s$ has an atom located outside this interval, at \( e^c + e^{-c} + \sqrt{s} \). This implies that, for \( N \) large enough, \( \mu_{t_N} \) must also allocate some mass outside this symmetric support, and hence absolute continuity does not hold.
\end{proof}

Finally, we compute the cutoff profile. 

\begin{thm}\label{cutoff profile Brownian of H_N^s+}
    The Brownian motion on $H_N^{s+}$ has cutoff at time $t_N = N\ln N$. More precisely, it exhibits the following cutoff profile where we write $d_N(t) := d_{\mathrm{TV}}(\vphi_t,h)$:
    $$
    d_N\left(
    N\ln \left(
    \sqrt sN
    \right)+cN
    \right) \underset{N\to\infty}{\longto}
    d_{\mathrm{TV}}
    \left(
    \eta_c^s,\nu_{s}
    \right)
    ,\quad c \sg 0,
    $$
    and 
    $$
    \underset{N\to\infty}{\lim\inf}\, d_N\left(
    N\ln \left(
    \sqrt sN
    \right)+cN
    \right) \ge  d_{\mathrm TV} (\eta_c^s,\nu_s),\quad c\in \R.
    $$
    where $\nu_{s}$ and $\eta_c^s$ are defined as in the end of Section \ref{preliminaries}. 
\end{thm}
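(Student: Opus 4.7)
The plan is to pass to the commutative subalgebra $\O(H_N^{s+})_{00}$ of Proposition \ref{cond exp}, on which $\vphi_{t_N}$ induces a classical probability measure $\mu_{t_N}$ on $\R$ (via $x_1 \mapsto X$) and the Haar state restricts to $\nu_s$ by Corollary \ref{Haar state subalgebra of H_N^s+}. The uniform bound $|\vphi_{t_N}(\chi_w)|^2 \le s^{-n} e^{-2cn}$ for $w \in M_n$ from the proof of Lemma \ref{abs cont}, combined with Cauchy--Schwarz, yields $|\mu_{t_N}(Q_n^s)|^2 \le e^{-2cn}$, summable in $n$ for $c \sg 0$. Rewriting $t_N = N\ln N + N(\ln\sqrt s + c)$ in Lemma \ref{type} then gives $\mu_{t_N}(Q_n^s) = \vphi_{t_N}(x_n) \to e^{-cn} = \eta_c^s(Q_n^s)$ for each $n$, and tightness (from bounded second moments) upgrades this to weak convergence $\mu_{t_N} \to \eta_c^s$ on $\R$.

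For the lower bound, valid for all $c \in \R$, testing against characteristic projections in $\O(H_N^{s+})_{00}$ yields $d_N(t_N) \ge d_{\mathrm{TV}}(\mu_{t_N}, \nu_s)$. I would then pick the test set $A_c = \{f_c \sg 1\} = (e^{-c}, +\infty)$, which is exactly the positive part of the Hahn decomposition of $\eta_c^s - \nu_s$: it contains the atom at $e^c + \sqrt s + e^{-c}$ whenever $c \sl 0$ and captures the absolutely continuous positive part as well. Since $\partial A_c = \{e^{-c}\}$ carries no $\eta_c^s$-mass, $A_c$ is an $\eta_c^s$-continuity set and Portmanteau gives
$$
\liminf_N |\mu_{t_N}(A_c) - \nu_s(A_c)| = |\eta_c^s(A_c) - \nu_s(A_c)| = d_{\mathrm{TV}}(\eta_c^s, \nu_s).
$$

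For the matching upper bound when $c \sg 0$, I split via the triangle inequality
$$
d_N(t_N) \le d_{\mathrm{TV}}(\vphi_{t_N}, \widetilde{\vphi}_{t_N}) + d_{\mathrm{TV}}(\widetilde{\vphi}_{t_N}, h).
$$
Since both $\widetilde{\vphi}_{t_N}$ and $h$ are invariant under the $h$-preserving conditional expectation $\F$, the second term equals $d_{\mathrm{TV}}(\mu_{t_N}, \nu_s)$; expanding the Radon--Nikodym density of $\mu_{t_N}$ in the orthonormal basis $(Q_n^s)$ of $L^2(\nu_s)$, dominated convergence with majorant $4 e^{-2cn}$ delivers $L^2$-convergence of densities to $f_c$, hence $L^1$-convergence, so this summand tends to $d_{\mathrm{TV}}(\eta_c^s, \nu_s)$. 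The first summand is at most $\frac{1}{2}\|\vphi_{t_N} - \widetilde{\vphi}_{t_N}\|_{L^2(h)}$, and by Parseval on the orthonormal family of irreducible characters,
$$
\|\vphi_{t_N} - \widetilde{\vphi}_{t_N}\|_{L^2(h)}^2 = \sum_{n \ge 1} \sum_{w \in M_n} |\vphi_{t_N}(\chi_w) - \overline a_n|^2,
$$
where $\overline a_n = \widetilde{\vphi}_{t_N}(\chi_w)$ is the common value on $M_n$. A Taylor expansion of $\vphi_{t_N}(\chi_w) = Q_{\underline \ell}(N) \exp(-t_N Q'_{\underline \ell}(N)/Q_{\underline \ell}(N))$ shows $\vphi_{t_N}(\chi_w) = s^{-n/2} e^{-cn}(1 + O(n \ln N /N))$ uniformly in $w \in M_n$, so the intra-type variance is $O(e^{-2cn} n^2 (\ln N /N)^2)$; summing over $n \ge 1$ for $c \sg 0$ yields $O((\ln N /N)^2) \to 0$.

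The main obstacle is precisely this Taylor control. The explicit example $\vphi_t(\chi_{a^2}) \ne \vphi_t(\chi_{aza})$ noted at the opening of Section \ref{profile} shows that the character values across $M_n$ are genuinely distinct, so one must track how the polynomial $Q_{\underline \ell}(X) = P_{\ell_1}(\sqrt X) \cdots P_{\ell_k}(\sqrt X)$ depends on the full sequence $\underline \ell$ with $\sum \ell_i = 2n$, not merely on its type $n$. The key combinatorial input is that $Q_{\underline \ell}$ is always monic of degree $n$ with subleading coefficient in $X$ of size $O(n)$, uniformly in $\underline \ell$; this translates at time scale $t_N \sim N\ln N$ into the $O(n \ln N /N)$ intra-type perturbation that, weighted by $e^{-2cn}$ and summed, closes the upper bound.
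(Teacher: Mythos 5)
Your overall architecture coincides with the paper's: reduce to the commutative algebra $\O(H_N^{s+})_{00}$, obtain the profile for the $\F$-invariant process via $\mathrm L^1$-convergence of densities (for $c\sg0$) and via Portmanteau on a Hahn set (for the lower bound), then transfer back by controlling $\Vert\vphi_{t_N}-\widetilde\vphi_{t_N}\Vert_1$. Your minor variations are harmless: you get weak convergence by the method of moments with tightness where the paper uses moment domination and compact supports, and you test the complementary Hahn set $\{f_c\sg1\}$ where the paper uses $\{f_c\sl1\}\cup V_s$; also note that for the lower bound the paper does not need spectral projections of $x_1$ directly, only that $\F$ is a norm-one $h$-preserving conditional expectation, so $\Vert(\vphi_t-h)\circ\F\Vert_1\le\Vert\vphi_t-h\Vert_1$.

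The genuine gap is in your final step. You bound the intra-type fluctuation by a Taylor expansion $\vphi_{t_N}(\chi_w)=s^{-n/2}e^{-cn}\big(1+O(n\ln N/N)\big)$ claimed uniform in $w\in M_n$, and then sum over all $n\ge1$; for that sum to be $O((\ln N/N)^2)$ the implied constant must also be uniform in $n$, and the combinatorial input you invoke (that $Q_{\underline\ell}$ is monic of degree $n$ with subleading coefficient of size $O(n)$) does not deliver this. For $n$ comparable to or larger than $N$ the approximation $Q_{\underline\ell}(N)\approx N^n$ fails outright (at fixed argument the Chebyshev polynomials grow linearly in the degree, not geometrically), and controlling the top two coefficients cannot give a two-sided estimate on $Q_{\underline\ell}(N)$ or on $Q'_{\underline\ell}(N)/Q_{\underline\ell}(N)$ in that regime. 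The paper sidesteps this entirely: each term $|\vphi_{t_N}(\chi_w)-\widetilde\vphi_{t_N}(\chi_w)|^2$ tends to $0$ by Lemma \ref{type} and is dominated, uniformly in $N$, by $4s^{-n}e^{-2cn}$ thanks to the one-sided estimates $P'_m(x)/P_m(x)\ge m/x$ and $P_m(x)\le x^m$ already exploited in Lemma \ref{abs cont}; summing over $M_n$ yields the $N$-independent summable majorant $4e^{-2cn}$, and dominated convergence for series concludes. Your argument can be repaired either by adopting this domination, or by running your expansion only for $n\le\sqrt N$ (say) and discarding the tail with the same majorant; as written, the uniformity in $n$ is unjustified.
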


\begin{proof}
    We begin by proving the result for the $\F$-invariant process $\widetilde{\vphi}_t := \vphi_t \circ \F$. By invariance, its study can be restricted to the commutative algebra $\O(H_N^{s+})_{00} = \C[x_1]$. From the first item of Proposition \ref{cond exp}, it follows that the $x_n$'s form an orthonormal basis for this space. In particular, the states $\widetilde{\vphi}_t$ and $h$ can be viewed as probability measures when composed with the isomorphism $\O(H_N^{s+})_{00} \to \C[X],x_1 \mapsto X$. In this context $h$ corresponds to $\nu_{s}$ and $\widetilde{\vphi}_{N\ln (\sqrt sN)+cN}$ converges to $\eta_c^s$ in moments for all $c\in \R$ (see Lemma \ref{type} and Corollary \ref{Haar state subalgebra of H_N^s+}).

    Let $c \sg 0$ and set $t_N : =\ln(\sqrt sN)+cN$. By Lemma \ref{abs cont}, $\widetilde \vphi_{t_N}$ is absolutely continuous w.r.t $h$, with density given by 
    \[
    f_{t_N} := \sum_{n \in \N} \widetilde{\vphi}_{t_N}(x_n)Q^s_n.
    \]
    Moreover, it follows from the proof of Lemma \ref{abs cont} that this density is uniformly dominated, independently of $N$, by 
    \[
    \sum_{n \in \N} e^{-cn}Q^s_n \in \mathrm{L}^2(\nu_s).
    \]
    Additionally, for any $n \in \N$, Lemma \ref{type} yields the earlier mentioned convergence in moments
    \[
    \widetilde{\vphi}_{t_N}(x_n) = \frac{1}{\sqrt{s^n}} \sum_{w \in M_n} \vphi_{t_N}(\chi_w) \underset{N \to \infty}{\longto} e^{-nc}.
    \]
    By dominated convergence, the summation and the limit in $N$ can be interchanged, leading to the desired $\mathrm{L}^1$-convergence:
    \begin{align*}
        d_{\mathrm{TV}}(\widetilde{\vphi}_{t_N}, h) &= \frac{1}{2} \|f_{t_N} - 1\|_1 = \frac{1}{2} \left\| \sum_{n \geq 1} \vphi_{t_N}(x_n) Q_n^s \right\|_{\nu_s,1} \\ &\underset{N \to \infty}{\longto} \frac{1}{2} \left\| \sum_{n \geq 1} e^{-nc} Q_n^s \right\|_{\nu_s,1} = d_{\mathrm{TV}}(\eta_c^s, \nu_s).
    \end{align*}

    Now let $c \sl 0$. As in the previous case, for any $n \in \N$, we have
    \[
    \vphi_{t_N}(Q_n^s) \leq e^{-nc} = \eta_c^s(Q_n^s).
    \]
    This implies that 
    \[
    \vphi_{t_N}(X^{2n}) \leq \eta_c^s(X^{2n}), \quad n \in \N,
    \]
    since $X^{2n}$ can be expressed as a linear combination of $Q_n^s$'s with positive coefficients. As $\eta_c^s$ has compact support, $\widetilde{\vphi}_{t_N}$ must also have compact support (see, for instance, \cite[Lect 2]{Sch20}). Given that moment convergence with compact support implies weak convergence, we have
    \[
    \widetilde{\vphi}_{t_N}(B) \xrightarrow[N \to \infty]{} \eta_c^s(B),
    \]
    for any continuity set $B$ of $\eta_c^s$, i.e., a Borel set $B \subset \R$ such that $\eta_c^s(\partial B) = 0$.
    To maximize the total variation distance between $\eta_c^s$ and $\nu_s$, we choose
    \[
    B_c = \{x \in (\sqrt{s} - 2, \sqrt{s} + 2) : f_c^s(x) \sl 1\} \cup V_s,
    \]
    where $V_s$ is a sufficiently small open neighborhood of $-1/\sqrt{s}$ (with $V_1 = \emptyset$ for $s = 1$), and $f_c^s$ is the density of the absolutely continuous part of $\eta_c^s$ w.r.t $\nu_s$. Weak convergence then implies:
    \begin{align*}
        d_{\mathrm{TV}}(\widetilde{\vphi}_{t_N}, h) &\geq \nu_s(B_c) - \vphi_{t_N}(B_c) \underset{N \to \infty}{\longto} \nu_s(B_c) - \eta_c^s(B_c) = d_{\mathrm{TV}}(\eta_c^s, \nu_s).
    \end{align*}

    The result is thus proven for the process centralized onto the commutative algebra. For the original process, the case $c \sl 0$ follows from the fact that the conditional expectation has norm 1:
    \begin{align*}
        \|\widetilde{\vphi}_{t} - h\|_1 &= \|(\vphi_{t} - h) \circ \F\|_1 \leq \|(\vphi_{t} - h)\|_1 \cdot \|\F\| \leq \|(\vphi_{t} - h)\|_1.
    \end{align*}

    For $c \sg 0$, (still writing $t_N = N\ln(\sqrt sN)+cN$) Lemma \ref{abs cont} shows that both $\vphi_{t_N}$ and $\widetilde{\vphi}_{t_N}$ are absolutely continuous with respect to $\nu_s$. Furthermore, their densities are uniformly bounded in $N$ by an integrable function, ensuring $\mathrm{L}^2$- and (therefore) $\mathrm{L}^1$-convergence by domination. Since they share the same moment convergence, we conclude:
    \begin{align*}
        \|\vphi_{t_N} - \widetilde{\vphi}_{t_N}\|_1^2 &\leq \left\| \sum_w (\vphi_{t_N} - \widetilde{\vphi}_{t_N})(\chi_w) \right\|_2^2 \leq \sum_w |\vphi_{t_N}(\chi_w) - \widetilde{\vphi}_{t_N}(\chi_w)|^2 \underset{N \to \infty}{\longto} 0.
    \end{align*}
\end{proof}

\begin{rem}\label{retranslated profiles}
    Note that we have intentionally translated the profile by \(\ln\sqrt{s}\) (using \(t_N = N\ln(\sqrt{s}N) + cN\) instead of \(N\ln N + cN\)) to simplify computations. This adjustment ensures that absolute continuity is lost precisely when \(c < 0\).  

    We highlight this remark as our next goal is to understand the influence of the parameter \(s\) in shaping the limit profile. Let \(f_s\) denote the retranslated limit profile. In other words:  
    \[
    f_s : c \longmapsto d_{\mathrm{TV}}(\eta^s_{c-\ln\sqrt s}, h).
    \]
    The total variation distance can be determined by integrating the absolutely continuous part and manually accounting for any remaining atoms.  

    Furthermore, as \(c \to \infty\), we observe the weak convergence:  
    \[
    f_s \underset{c\to\infty}{\longto} f_\infty = \left\{  
    c \mapsto \frac{e^{-c}}{1 + e^{-c}}  
    \right\}.  
    \]
    Below is a plot of the functions \(f_s\) for \(s = 1, 2, 4, 9\) and \(\infty\):  
    \begin{figure}[h]  
    \centering  
    \includegraphics[width=0.6\textwidth]{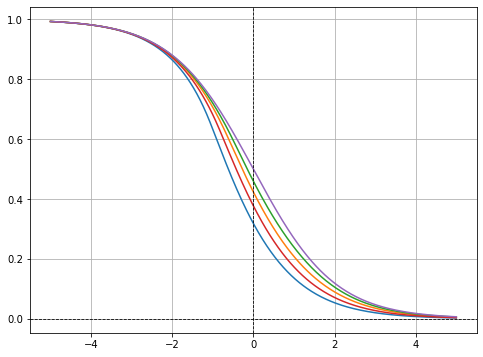}  
    \caption{Graph comparing the functions \(f_1\), \(f_2\), \(f_4\), \(f_9\), and \(f_\infty\) (shown in blue, red, orange, green and purple, respectively).}  
    \label{fig:myimage}  
    \end{figure}  
\end{rem}

\newpage

\begin{prop}\label{different s different profile}
    Every $s\in \N^*$ defines a unique cutoff profile (unique up to affine transformation) through the formula
    $$
    c\longmapsto d_{\mathrm{TV}}\left( 
    \nu_s , \eta_c^s
    \right).
    $$
\end{prop}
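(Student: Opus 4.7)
The plan is to distinguish the profiles $f_s(c) := d_{\mathrm{TV}}(\nu_s, \eta_c^s)$ by computing the first two coefficients of the asymptotic expansion of $1 - f_s(c)$ as $c \to -\infty$. Since the affine transformations $c \mapsto ac + b$ with $a > 0$ have two free parameters, an expansion to order $e^{2c}$ yields exactly enough constraints (one per order) to pin down the affine data and force $s = s'$.

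First, I would work in the regime $c < -\log(\sqrt s + 2)$, so that $e^{-c}$ strictly exceeds $\sup \operatorname{supp}(\nu_s) = \sqrt s + 2$. In this regime the density $f_c(x) = (e^c + \sqrt s)/(e^c + e^{-c} + \sqrt s - x)$ of Lemma \ref{decompo of eta_c^s} satisfies $f_c < 1$ throughout $\operatorname{supp}(\nu_s)$, and the atom of $\eta_c^s$ at $e^c + \sqrt s + e^{-c}$ sits strictly outside this support. These observations yield the clean decomposition
\[
f_s(c) = \tfrac12 \int (1 - f_c)\, d\nu_s + \tfrac12\, m(c), \qquad m(c) := \frac{e^{-c} - e^c}{e^{-c} + \sqrt s}.
\]
Setting $u := e^c$, the identity $f_c(x) = u(u + \sqrt s)/(1 + u(u + \sqrt s - x))$ admits a geometric expansion $f_c(x) = \sqrt s\, u + (1 - s + \sqrt s\, x)\, u^2 + O(u^3)$, uniformly on the compact set $\operatorname{supp}(\nu_s)$. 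Integrating against $\nu_s$ and using $\int x\, d\nu_s = \nu_s(Q_1^s) = 0$ (by orthonormality of $(Q_n^s)_n$) gives $\int f_c\, d\nu_s = \sqrt s\, e^c + (1 - s)\, e^{2c} + O(e^{3c})$, while an analogous expansion of $m(c) = (1 - u^2)/(1 + \sqrt s\, u)$ yields $m(c) = 1 - \sqrt s\, e^c + (s - 1)\, e^{2c} + O(e^{3c})$. Adding and dividing by $2$ produces
\[
1 - f_s(c) = \sqrt s\, e^c - (s - 1)\, e^{2c} + O(e^{3c}) \qquad \text{as } c \to -\infty.
\]

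Second, I would use this expansion to rule out affine equivalence. Suppose $f_s(c) = f_{s'}(ac + b)$ for some $a > 0$ and $b \in \R$. Then $c^{-1} \log(1 - f_s(c)) \to 1$ on the left-hand side and $\to a$ on the right, forcing $a = 1$. Matching the coefficient of $e^c$ gives $\sqrt s = \sqrt{s'}\, e^b$, hence $e^{2b} = s/s'$; plugging this into the equality of $e^{2c}$-coefficients gives $s - 1 = (s' - 1)\, s/s'$, which rearranges to $s = s'$, and consequently $b = 0$. Therefore distinct values of $s \in \N^*$ produce non-equivalent profiles. The only routine subtlety, which I would handle by standard domination, is ensuring uniformity of the $O(e^{3c})$ remainder after integration against $\nu_s$; this follows from compactness of the support together with the lower bound $e^{-c} + e^c + \sqrt s - x \ge e^{-c} - 2$, valid once $c$ is sufficiently negative.
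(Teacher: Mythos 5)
Your proof is correct and follows essentially the same route as the paper's: both rest on the observation that for $c$ sufficiently negative the density $f_c$ of the absolutely continuous part of $\eta_c^s$ is bounded by $1$ on the support of $\nu_s$ while the atom sits outside it, so the total variation distance reduces to the explicit atom mass, from which $a$, $b$ and $s$ are recovered. The only differences are cosmetic: the paper matches exponents in the resulting exact identity on a half-line whereas you extract the same information from a two-term expansion at $-\infty$, and your separate expansion of $\int f_c\,\d\nu_s$ is redundant since normalization already forces $\int(1-f_c)\,\d\nu_s = m(c)$, so that $f_s(c)=m(c)$ exactly in that regime.
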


\begin{proof}
    Consider the retranslated profiles defined in Remark \ref{retranslated profiles}: 
    $$
    f_s = \left\{ 
    c\longmapsto d_{\mathrm{TV}}\left( 
    \nu_c^s,\nu_s
    \right)
    \right\},\quad s\ge 1.
    $$
    where we have written $\nu_c^s = \eta^s_{c-\ln\sqrt s}$. Recall the decomposition from Lemma \ref{decompo of eta_c^s}:
    $$
    \nu_c^s = g_c^s \d\nu_s + \mathbf 1_{c\sl \ln \sqrt s}\frac{e^{-c}-e^c/s}{e^{-c}+1}\delta_{\sqrt s(e^{-c}+1)+ e^c/{\sqrt s}},
    $$
    with 
    $$
    g_c^s(x) = 
    \frac{\sqrt s + e^c/\sqrt s}{\sqrt s + e^{-c}\sqrt s+e^c/\sqrt s - x}
    ,\quad 
    x \in [\sqrt s-2,\sqrt s+2].
    $$
    Note that $g_c^s \downarrow 0$ as $c\to -\infty$, this ensures that $\vert g_c^s\vert \le 1$, in an interval of the form $(-\infty, c_s]$. This further implies that, in that case, the total variation distance between $\nu_c$ and $\nu_c^s$ is carried by the singular part. In other words, for every 
    $c\in (-\infty,c_s]$, we have 
    \begin{align*}
        f_s(c) &= \nu_c^s\big(\{\sqrt s(e^{-c}+1)+e^c/\sqrt s\}\big) - \nu_s\big(\{\sqrt s(e^{-c}+1)+e^c/\sqrt s\}\big) \\ &= 
        \frac{e^{-c}-e^c/s}{e^{-c}+1}.
    \end{align*}
    Now fix $r, s \in \N^*$ and assume $f_r$ and $f_s$ are equivalent modulo affine transformation so that there exist $a\sg 0$ and $b\in \R$, such that 
    $$
    f_r(ac+b) = f_s(c),\quad c\in \R.
    $$
    From what we previously established, this implies that for every $c$ in an interval of the form $(-\infty, c_0]$, we must have 
    $$
    \frac{e^{-(ac+b)}-e^{ac+b}/r}{e^{-(ac+b)}+1} = \frac{e^{-c}-e^c/s}{e^{-c}+1}.
    $$
    Setting $y := e^{-c}$, this can be rewritten as 
    $$
    \left(y^ae^{-b}-\frac{y^{-a}e^b}{r}\right)
    (y+1) = \left(y-\frac{y^{-1}}{s}\right)(y^ae^{-b}+1),
    $$
    Since this holds for infinitely many $y$, the exponents on the left and right hand side must match. More precisely, we have 
    $$
    \{1+a,1-a,a,-a\}
    = 
    \{1+a,a-1,1,-1\}.
    $$
    In particular, $1$ is in $\{1+a,1-a,a,-a\}$ and since $a>0$, the only possibility is that $a=1$. Knowing this, we rearrange the equality to get
    $$
    y\big( 
    e^{-b} - 1
    \big)  + y^{-1} \left( 
    \frac{1}{s}-\frac{e^b}{r}
    \right) + \left(
    \frac{e^{-b}}{s}-\frac{e^b}{r}
    \right) = 0,
    $$
    easily leading to $b=0$ and $s=r$. 
\end{proof}

\begin{rem}
    One can easily make an incorrect conjecture -- when inspecting Figure \ref{fig:myimage} -- that the $f_s$'s (the retranslated profiles) initially appear to be the same (specifically in the region where all the weight is carried by the singular part) before eventually leading to distinct profiles. It follows from the proof of Proposition \ref{different s different profile} that, in the region where the total variation distance is determined only by the singular part, their expression is given by 
    $$
    f_s: c\longmapsto \frac{e^{-c}-e^c/s}{e^{-c}+1},
    $$
    the dependency in $s$ is very small for $c$ in a neighborhood of $-\infty$ but it still holds. 
\end{rem}

\section[\texorpdfstring{$H_N^{\infty+}$}{H\_N^{∞+}}]
        {The case of \texorpdfstring{$H_N^{\infty+}$}{H\_N^{∞+}}}\label{654665768}

The irreducible characters of $H_N^{\infty+}$ are similar to the finite case, let us restate it for clarity. 
The nontrivial characters are labeled by the elements of the submonoid 
$$
M_\infty = \langle 
az^ra : r\in \Z
\rangle \subset \langle a,z,z^{-1}\rangle,
$$
such that 
$$
\chi_{a^2} = \sum_{i=1}^N p_{ii}-1 \and \chi_{az^ra}
= \sum_{i=1}^N h_{ii}^r
,\quad r\in \Z\setminus \{0\},
$$
and 
$$
    \chi_{vaz^i}\cdot \chi_{z^jaw} = 
    \left\{
    \begin{array}{ll}
    \chi_{vaz^{i+j}aw} & \hbox{if }i+j\ne0 \\
    \chi_{va^2w}+ \chi_v\cdot \chi_w &\hbox{else}. 
\end{array}\right.
$$

In the infinite case, there are infinitely many elements of each type. Namely, it does not make sense to consider an element that would be the normalised sum of all elements of a certain type. We therefore make a few conjectures. 

\begin{conj}
Let $\vphi$ denote the standard Brownian motion (as a Lévy process) on $H_N^{\infty+}$. 
\begin{enumerate}[label = \emph{(\roman*)}]
    \item For every $t\ge0$, the state $\vphi_t$ is not absolutely continuous w.r.t. the Haar state. 
    \item Setting $d_N(t) := d_{\mathrm{TV}}(\vphi_t,h)$, we have 
    $$
    \underset{N\to\infty}{\lim\inf}\, d_N\left(
    N\ln N + cN
    \right) \ge  \frac{e^{-c}}{1+e^{-c}} = f_\infty(c),\quad c\in \R.
    $$
\end{enumerate}
\end{conj}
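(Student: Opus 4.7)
The plan is to find, in the enveloping von Neumann algebra \(C(H_N^{\infty+})^{**}\), a single projection that witnesses both claims. The natural witness is \(P_i := \mathbf{1}_{\{1\}}(u_{ii})\), the spectral projection of the \(i\)-th diagonal entry at the eigenvalue \(1 \in \T\). Centrality of \(\vphi_t^\infty\) and the identity \(\E[u_{ii}^r] = \tfrac{1}{N}\chi_{az^ra}\) for \(r \neq 0\) give \(\vphi_t^\infty(u_{ii}^r) = e^{-t/N}\) independently of \(r \in \Z\setminus\{0\}\), whereas \(h(u_{ii}^r) = 0\) by character orthogonality. On the commutative subalgebra \(A_i := C^*(u_{ii})\), whose Gelfand spectrum is \(\{0\}\cup\T\), this constant Fourier profile is exactly the signature of an atom of mass \(e^{-t/N}\) at \(1 \in \T\) in \(\vphi_t^\infty|_{A_i}\) (by Wiener's theorem), while \(h|_{A_i} = \tfrac{N-1}{N}\delta_0 + \tfrac{1}{N}\,\mathrm{Haar}_{\T}\) is atomless on \(\T\). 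Equivalently, \(\vphi_t^\infty(P_1) = e^{-t/N} > 0 = h(P_1)\) for all \(t > 0\), which rules out absolute continuity and proves (i); the case \(t = 0\) is immediate since \(\vphi_0 = \varepsilon\) is a pure point evaluation.

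For (ii) I would form the positive counting element \(X := \sum_{i=1}^N P_i\) and apply Cauchy--Schwarz to its support projection \(q := \mathrm{supp}(X)\). We have \(\vphi_{t_N}^\infty(X) = N e^{-t_N/N} \to e^{-c}\) along \(t_N = N\ln N + cN\), while the operator inequality \(\mathbf{1}_{(\epsilon,\infty)}(X) \le X/\epsilon\) together with \(h(X) = 0\) and normality of the extension of \(h\) to \(C(H_N^{\infty+})^{**}\) forces \(h(q) = 0\). The key step is the second-moment estimate
\[
\vphi_t^\infty(X^2) = Ne^{-t/N} + N(N-1)\,\vphi_t^\infty(P_1 P_2),
\]
and I would compute \(\vphi_t^\infty(P_1 P_2)\) via the two-variable Ces\`aro approximation: \(P_i\) is the strong-operator limit of \(\tfrac{1}{2K+1}\sum_{n=-K}^K u_{ii}^n\) in \(C(H_N^{\infty+})^{**}\), reducing the problem to the evaluation of \(\vphi_t^\infty(u_{11}^m u_{22}^n)\). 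Centrality and the algebraic identity \(\E[u_{11}^m u_{22}^n] = \frac{1}{N(N-1)}[\chi_{az^ma}\chi_{az^na} - \chi_{az^{m+n}a}]\) valid for \(m, n, m+n \neq 0\), combined with the fusion rule \(\chi_{az^ma}\chi_{az^na} = \chi_{az^m a^2 z^n a} + \chi_{az^{m+n}a}\) and the type-\(2\) value \(\vphi_t^\infty(\chi_{az^m a^2 z^n a}) = N(N-1)\,e^{-t(2N-1)/(N(N-1))}\), yield \(\vphi_t^\infty(u_{11}^m u_{22}^n) = e^{-t(2N-1)/(N(N-1))}\), independently of \(m,n\). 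The boundary cases \(m=0\), \(n=0\), or \(m+n=0\) occupy a set of Ces\`aro weight tending to zero, so \(\vphi_t^\infty(P_1 P_2) = e^{-t(2N-1)/(N(N-1))}\), whence \(N(N-1)\vphi_{t_N}^\infty(P_1 P_2) \to e^{-2c}\) and \(\vphi_{t_N}^\infty(X^2) \to e^{-c}+e^{-2c}\). Cauchy--Schwarz on \(\vphi_t^\infty(X) = \vphi_t^\infty(Xq)\) then gives
\[
\vphi_{t_N}^\infty(q) \ge \frac{\vphi_{t_N}^\infty(X)^2}{\vphi_{t_N}^\infty(X^2)} \longrightarrow \frac{e^{-c}}{1+e^{-c}} = f_\infty(c),
\]
so that \(d_{\mathrm{TV}}(\vphi_{t_N}^\infty, h) \ge \vphi_{t_N}^\infty(q) - h(q) \to f_\infty(c)\).

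The main obstacle is the analytic justification of the two-variable Ces\`aro approximation of \(P_1 P_2\). Since \(u_{11}\) and \(u_{22}\) do not commute in \(C(H_N^{\infty+})\), the projections \(P_1\) and \(P_2\) do not commute either, so one cannot restrict to a single abelian subalgebra; the argument must take place in \(C(H_N^{\infty+})^{**}\), where \(\vphi_t^\infty\) has a unique normal extension. The one-variable strong-operator convergences \(e_K^{(i)} := \tfrac{1}{2K+1}\sum_{n=-K}^K u_{ii}^n \to P_i\) are standard, and the uniform norm bound \(\|e_K^{(i)}\| \le 1\) allows joint strong-operator continuity of the product, yielding \(e_K^{(1)} e_K^{(2)} \to P_1 P_2\); the remaining delicate point is that the normal extension of \(\vphi_t^\infty\) interchanges with this double limit, which should follow from normality together with uniform norm bounds but deserves careful verification.
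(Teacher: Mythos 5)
This statement is left as a conjecture in the paper, which offers only an \emph{idea} of proof: restrict the process to the subalgebra generated by the symmetrized sums $\sum_{r=1}^{s}\big(\chi_{az^ra}+\chi_{az^{-r}a}\big)$, argue that this approximately reproduces the $H_N^{2s+}$ situation, and let $s\to\infty$. Your route is genuinely different: you produce an explicit witnessing projection in $C(H_N^{\infty+})^{**}$ and run a second-moment (Paley--Zygmund) argument, in the spirit of the classical fixed-point lower bounds for cutoff. This buys two things the paper's sketch does not: it treats both parts of the conjecture with a single object, and it avoids the approximation issue that makes the paper's idea only heuristic (the subalgebra generated by the symmetrized element is not stable under fusion, since $r+r'$ can leave the index range, which is why the paper can only say the restricted process ``should behave like'' the one on $H_N^{2s+}$ and writes $\widetilde f_{2s}$ rather than $f_{2s}$). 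The analytic points you flag as delicate are in fact routine: bounded pointwise convergence of the Ces\`aro kernels on $\sigma(u_{ii})\subset\{0\}\cup\T$ gives strong$^*$ convergence $e_K^{(i)}\to P_i$ in the bidual, uniform norm bounds give $e_K^{(1)}e_K^{(2)}\to P_1P_2$ strongly, and normal states are weakly continuous on bounded sets; likewise $h(X)=0$ forces $h(\mathrm{supp}(X))=0$ by normality. For part (i), the cleanest phrasing is that in the GNS representation of $h$ the spectral projection $\mathbf 1_{\{1\}}(u_{ii})$ vanishes (no atom), whereas any normal extension of $\vphi_t$ to $\mathrm L^\infty(H_N^{\infty+})$ would have to assign it the value $\lim_K\vphi_t(e_K^{(i)})=e^{-t/N}>0$.

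The one genuine gap is that the crucial identity $\E[u_{11}^m u_{22}^n]=\frac{1}{N(N-1)}\chi_{az^ma^2z^na}$ (for $m,n,m+n\neq 0$) is asserted, not proved; your whole computation of $\vphi_t(P_1P_2)$, and hence the matching of the limit with $f_\infty(c)$, rests on it. Fortunately it is true, and you should include the following argument. First, $(u_{ij}^r)_{ij}$ is a unitary corepresentation (because $u_{ik}u_{ik'}=0$ for $k\neq k'$, the range projections $p_{ik}$ being pairwise orthogonal along rows and columns) whose character is $\chi_{az^ra}$; it is therefore the $N$-dimensional irreducible $az^ra$, which gives $\E[u_{ii}^r]=\chi_{az^ra}/N$. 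Second, the map $T:e_k\mapsto e_k\otimes e_k$ is an isometric intertwiner from $az^{m+n}a$ into $az^ma\otimes az^na$, since $u_{ik}^m u_{jk}^n=\delta_{ij}u_{ik}^{m+n}$ for $m,n,m+n\neq0$ (again by orthogonality of the $p_{ik}$'s and the partial isometry relations); by the fusion rules the orthogonal complement of $\mathrm{Im}\,T$ carries the irreducible $az^ma^2z^na$ of dimension $N(N-1)$. Since $e_1\otimes e_2\perp \mathrm{Im}\,T$, the vector state $\omega_{e_1\otimes e_2}$ sees only that complement, and the $h$-preserving conditional expectation onto the central algebra sends $u_{11}^mu_{22}^n$ to $\chi_{az^ma^2z^na}/(N(N-1))$, as you claim. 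With this supplied, the computation $\vphi_{t_N}(X)\to e^{-c}$, $\vphi_{t_N}(X^2)\to e^{-c}+e^{-2c}$, and the Cauchy--Schwarz bound $\vphi_{t_N}(q)\ge \vphi_{t_N}(X)^2/\vphi_{t_N}(X^2)$ are correct and yield exactly $f_\infty(c)$, so your proposal settles the conjecture.
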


\begin{proof}[Idea of proof]
    An idea that could work to prove this conjecture is that the Brownian motion on $H_N^{\infty +}$ restricted to the subalgebra generated by an element of the form 
    $$
    x = \sum_{r = 1}^s \big(\chi_{az^ra}
    +\chi_{az^{-r}a}
    \big)
    ,
    $$
    should behave in a similar fashion as the Brownian motion on $H_N^{2s+}$. In particular, absolute continuity should be lost for $t\le N\ln N - N\ln \sqrt {2s}$, and one should have a lower bound of the form 
    $$
    \underset{N\to\infty}{\lim\inf}\, d_N\left(
    N\ln N
    +cN
    \right) \ge  \widetilde f_{2s}(c),\quad c\in \R.
    $$
    where $\widetilde f_{2s}$ should be close to $f_{2s}$. Both of these results holding for any $s$, this would imply both a loss of absolute continuity everywhere and a lower bound of the form 
    $$
    \underset{N\to\infty}{\lim\inf}\, d_N\left(
    N\ln N
    +cN
    \right) \ge  \sup_{s}\widetilde f_{2s}(c) = f_\infty(c),\quad c\in  \R.
    $$
\end{proof}

\begin{rem}
    Recall that, given a compact quantum group $\G$, a central state $\vphi$ has an $\mathrm L^2$-density with respect to the Haar state if and only if 
$$
\sum_{\alpha \in \Irr(\G)} \vert 
\vphi(\chi_\alpha)
\vert ^2 < \infty.
$$
It is easy to see that $\vphi_t$ does not have an $\mathrm L^2$-density: for example, all values $\vphi_t(\chi_{az^ra})$ for $r \in \Z^*$ are equal and nonzero, so that
$$
\sum_{r \in \Z^*} \vert \vphi_t(\chi_{az^ra})\vert^2 = \infty.
$$
\end{rem}



\begin{thebibliography}{10}

\bibitem{Ald83}
D.~Aldous.
\newblock Random walks on finite groups and rapidly mixing {M}arkov chains.
\newblock {\em Séminaire de Probabilités. Proceedings. Springer Berlin Heidelberg}, S(XVII), 1983.

\bibitem{AP}
C.~Anantharaman and S.~Popa.
\newblock An introduction to {II}$_1$ factors.
\newblock {\em Preprint}, 2010.

\bibitem{Ban99}
T.~Banica.
\newblock Symmetries of a generic coaction.
\newblock {\em Math. Ann.}, 314(4):763--780, 1999.

\bibitem{BV09}
T.~Banica and R.~Vergnioux.
\newblock Fusion rules for quantum reflection groups.
\newblock {\em J. Noncommut. Geom.}, 3(3):327--359, 2009.

\bibitem{BD92}
D.~Bayer and P.~Diaconis.
\newblock Trailing the dovetail shuffle to its lair.
\newblock {\em Ann. Appl. Probab.}, 2:294–313, 1992.

\bibitem{BLPS18}
N.~Berestycki, E.~Lubetzky, Y.~Peres, and A~Sly.
\newblock Random walks on the random graph.
\newblock {\em Ann. Probab.}, 46(1):456--490, 2018.

\bibitem{Bla06}
B.~Blackadar.
\newblock Operator algebras, encyclopædia of math. sciences.
\newblock {\em Springer}, 122:165–210, 2006.

\bibitem{BB06}
M.~Bożejko and W.~Bryc.
\newblock On a class of free {L}évy laws related to a regression problem.
\newblock {\em J. Funct. Anal.}, 236(1):59--77, 2006.

\bibitem{BGJ20}
M.~Brannan, L.~Gao, and M.~Junge.
\newblock Complete logarithmic {S}obolev inequalities via {R}icci curvature bounded below {II}.
\newblock {\em J. Topol. Anal.}, 2020.

\bibitem{BR17}
M.~Brannan and Z-J. Ruan.
\newblock $\mathrm {L}^p$-representations of discrete quantum groups.
\newblock {\em J. Reine Angew. Math.}, 732:165–210, 2017.

\bibitem{BN22}
A.~Bufetov and P.~Nejjar.
\newblock Cutoff profile of {ASEP} on a segment.
\newblock {\em Probab. Theory Related Fields}, 183(1):229--253, 2022.

\bibitem{CFK14}
F.~Cipriani, U.~Franz, and A.~Kula.
\newblock Symmetries of {L}évy processes, their {M}arkov semigroups and potential theory on compact quantum groups.
\newblock {\em J. Funct. Anal.}, 266(5):2789--2844, 2014.

\bibitem{Del24}
J.~Delhaye.
\newblock Brownian motion on the unitary quantum group: construction and cutoff.
\newblock {\em preprint}, 2024.
\newblock \href{https://arxiv.org/pdf/2409.06552}{arXiv:2409.06552}.

\bibitem{Dia88}
P.~Diaconis.
\newblock Group representations in probability and statistics.
\newblock {\em Lecture Notes-Monograph Series, Institute of Math. Statistics}, 11, 1988.

\bibitem{DGM90}
P.~Diaconis, R.L. Graham, and J.A. Morrison.
\newblock Asymptotic analysis of a random walk on a hypercube with many dimensions.
\newblock {\em Random Structures Algorithms}, 1.1:51–72, 1990.

\bibitem{DS81}
P.~Diaconis and M.~Shahshahani.
\newblock Generating a random permutation with random transpositions.
\newblock {\em Probab. Theory Related Fields}, 57(2):159--179, 1981.

\bibitem{FFS24}
U.~Franz, A.~Freslon, and A.~Skalski.
\newblock {G}aussian generating functionals on easy quantum groups.
\newblock {\em J. Theoret. Probab.}, 2024.

\bibitem{FHLUZ17}
U.~Franz, G.~Hong, F.~Lemeux, M.~Ullrich, and H.~Zhang.
\newblock Hypercontractivity of heat semigroups on free quantum groups.
\newblock {\em J. Operator Theory}, 77(1):61--76, 2017.

\bibitem{FKS16}
U.~Franz, A.~Kula, and A.~Skalski.
\newblock L{\'e}vy processes on quantum permutation groups.
\newblock In {\em Noncommutative Analysis, Operator Theory and Applications}, volume 252 of {\em Oper. Theory Adv. Appl.}, pages 891--921. Birkhäuser, 2016.

\bibitem{FS16}
U.~Franz and A.~Skalski.
\newblock Noncommutative mathematics for quantum systems.
\newblock {\em Cambridge University Press}, 2016.

\bibitem{Fre18}
A.~Freslon.
\newblock Quantum reflections, random walks and cut-off.
\newblock {\em Internat. J. Math.}, 29(14), 2018.

\bibitem{Fre19}
A.~Freslon.
\newblock Cut-off phenomenon for random walks on free orthogonal quantum groups.
\newblock {\em Probab. Theory Related Fields}, 174(3–4):731--–760, 2019.

\bibitem{FTW21}
A.~Freslon, L.~Teyssier, and S.~Wang.
\newblock Cutoff profiles for quantum {L}évy processes and quantum random transpositions.
\newblock {\em Probab. Theory Related Fields}, 183:1285--1327, 2022.

\bibitem{Lac16}
H.~Lacoin.
\newblock Mixing time and cutoff for the adjacent transposition shuffle and the simple exclusion.
\newblock {\em Ann. Probab.}, 44(2):1426–1487, 2016.

\bibitem{Lem15}
F.~Lemeux.
\newblock Haagerup approximation property for quantum reflection groups.
\newblock {\em Proceedings of the American Math. Society}, 143(5):2017--2031, 2015.

\bibitem{LLP10}
D.~Levin, M.~Luczak, and Y.~Peres.
\newblock Glauber dynamics for the mean-field {I}sing model: cut-off, critical power law, and metastability.
\newblock {\em Probab. Theory Related Fields}, 146:223--265, 2009.

\bibitem{Lia04}
M.~Liao.
\newblock Lévy processes and {F}ourier analysis on compact {L}ie groups.
\newblock {\em Ann. Probab.}, pages 1553--1573, 2004.

\bibitem{LS10}
E.~Lubetzky and A.~Sly.
\newblock Cutoff phenomena for random walks on random regular graphs.
\newblock {\em Duke Math. J.}, 153(3):475--510, 2010.

\bibitem{Mcc17}
J.P. McCarthy.
\newblock Random walks on finite quantum groups : Diaconis-{S}hahshahani theory for quantum groups.
\newblock {\em PhD Thesis}, 2017.
\newblock \href{https://arxiv.org/pdf/1709.09357}{arXiv:1709.09357}.

\bibitem{Mcc19}
J.P. McCarthy.
\newblock Diaconis–{S}hahshahani upper bound lemma for finite quantum groups.
\newblock {\em J. Fourier Anal. Appl.}, 25(5):2463--2491, 2019.

\bibitem{Mel14}
P-L. Méliot.
\newblock The cut-off phenomenon for {B}rownian motions on compact symmetric spaces.
\newblock {\em Potential Analysis}, 40(4):427--509, 2014.

\bibitem{NT13}
S.~Neshveyev and L.~Tuset.
\newblock Compact quantum groups and their representation categories.
\newblock {\em Société Mathématique de France}, 20:165–210, 2013.

\bibitem{NT22}
E.~Nestoridi and S.~Olesker-Taylor.
\newblock Limit profiles for {M}arkov chains.
\newblock {\em Probab. Theory Related Fields}, 182:157--188, 2022.

\bibitem{NS06}
A.~Nica and R.~Speicher.
\newblock {\em Lectures on the Combinatorics of Free Probability}, volume 335 of {\em London Math. Society Lecture Note Series}.
\newblock Cambridge University Press, 2006.

\bibitem{OTT25}
S.~Olesker-Taylor, L.~Teyssier, and P.Thévenin.
\newblock Sharp character bounds and cutoff profiles for symmetric groups.
\newblock {\em preprint}, 2025.
\newblock , \href{https://arxiv.org/pdf/2503.12735}{arXiv:2503.12735}.

\bibitem{SY01}
N.~Saitoh and H.~Yoshida.
\newblock The infinite divisibility and orthogonal polynomials with a constant recursion formula in free probability theory.
\newblock {\em Probab. Math. Statist.}, 21(1):159--170, 2001.

\bibitem{Sch20}
K.~Schmüdgen.
\newblock Ten lectures on the moment problem.
\newblock {\em Lecture Notes}, 2020.
\newblock \href{https://arxiv.org/pdf/2008.12698}{arXiv:2008.12698}.

\bibitem{Tey19}
L.~Teyssier.
\newblock Limit profile for random transpositions.
\newblock {\em Ann. Probab.}, 48(5):2323–2343, 2019.

\bibitem{Wan95}
S.~Wang.
\newblock Free products of compact quantum groups.
\newblock {\em Comm. Math. Phys.}, 167(3):671--692, 1995.

\bibitem{Wor98}
S.L. Woronowicz.
\newblock Compact quantum groups, symétries quantiques.
\newblock {\em Les Houches}, pages 845--884, 1998.

\end{thebibliography}
\end{document}